\newtheorem{theorem}{Theorem}
\newtheorem{definition}{Definition}
\newtheorem{cons}{Corollary}
\journal{Journal of Mathematical Analysis and Applications}
\begin{document}

	\begin{frontmatter}
		
		
		
		\title{Area functional and majorant series estimates in the class of bounded functions in the disk}
		
		
		\author{R.S. Khasyanov\fnref{label2}}
		\ead{st070255@student.spbu.ru}
		\affiliation[label2]{organization={Saint Petersburg State University},
			addressline={Universitetskii prosp., 28D}, 
			city={Saint Petersburg},
			postcode={198504}, 
			country={Russian Federation}}
		
		\begin{abstract}
		In this article, the new inequalities for  the weighted sums of coefficients in the class of bounded functions in the disk are obtained. We develop the methods of I.R.~Kayumov and S.~Ponnusamy, using E.~Reich's theorem on the majorization of subordinate functions. The sharp estimates for the area of the image of the disk of radius  $r$ under the action of the function which is expanded into a lacunary series of standard form are obtained. Under significantly lower than in \cite{Khas} restrictions on the initial coefficient, the estimates for the Bohr--Bombieri function of the Hadamard convolution operator are proved. Using the example of the differentiation operator, it is shown that in some cases the new method for calculating the lower bound for the Bohr radius of the Hadamard operator with a fixed initial coefficient is more effective than the known one.

	\end{abstract}
		
		
		\begin{keyword}
Bohr inequality, area functional, subordinate functions, Hadamard convolution operator
			
			
			
		\end{keyword}
		
	\end{frontmatter}
	
	
	\section{Introduction} \label{intro}
	Let $\mathbb{D}=\{|z|<1\},$ $f(z)=\sum_{n\ge 0}a_nz^n, \: z\in \mathbb{D},$ $\|f\|_{\infty}=\sup_{z\in \mathbb{D}}|f(z)|$ and $c_n\ge 0,\: n\ge 0.$ In this article we study the  inequalities for the following functionals in the class of bounded analytic functions in the disk:
	
	$$\sum_{n\ge m}c_n|a_n|^2r^{2n} \:\: \text{and} \:\: \sum_{n\ge m}c_n|a_n|r^n, \quad 0\le r<1, \: m\in \mathbb{N}_0.$$
In particular, example  of these sums is the area functional (area of $f(r\mathbb{D})$ taking account of multiplicity)
$$S_rf=\int_{r\mathbb{D}}|f^{\prime}(z)|^2dxdy=\pi\sum_{n\ge 1}n|a_n|^2r^{2n}.$$
The estimates of this functional will be given in the Section \ref{Sec3}. Another examples of these functionals is the $L^2$--norm of the analytic function on the circle $$\|f(rz)\|_2^2=\dfrac{1}{2\pi}\int_{0}^{2\pi }|f(re^{i\theta})|^2d\theta=\sum_{n\ge 0}|a_n|^2r^{2n}$$
and majorant series
$$M_rf=\sum_{n\ge 0}|a_n|r^n.$$

Majorant series is related to the classical Bohr inequality. In 1914, H. Bohr, studying the problems of absolute convergence of Dirichlet series, noticed the following interesting fact, which is now called the Bohr phenomenon:

\newtheorem*{A}{Theorem A}
\begin{A} \label{Bohr}
		Let $f(z)=\sum_{n\ge 0}a_nz^n, \:z\in \mathbb{D}$ and $\|f\|_{\infty}\le1$. Then
		$$\sum_{n\ge 0}^{} |a_n| r^n\le1, \quad
		0\le r\le 1/3.$$
		The number $1/3$ is the best possible.
	\end{A}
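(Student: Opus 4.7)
The plan is to follow the classical route via coefficient bounds obtained from the Schwarz--Pick lemma. First I would reduce to a single-variable optimization by establishing that if $\|f\|_\infty\le 1$ then $|a_0|\le 1$ and $|a_n|\le 1-|a_0|^2$ for every $n\ge 1$. This follows by applying the Schwarz lemma to the function $g(z)=\frac{1}{z}\cdot\frac{f(z)-a_0}{1-\overline{a_0}f(z)}$, which is analytic in $\mathbb{D}$ and bounded by $1$; reading off its Taylor coefficients and using $|g^{(n-1)}(0)/(n-1)!|\le 1$ yields the desired estimate on $|a_n|$.

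Next I would substitute into the majorant series. Writing $a=|a_0|\in[0,1]$, the previous step gives
\[
\sum_{n\ge 0}|a_n|r^n \;\le\; a + (1-a^2)\sum_{n\ge 1}r^n \;=\; a + (1-a^2)\frac{r}{1-r}.
\]
To prove that the right-hand side is $\le 1$ it is equivalent (factoring out $1-a$ when $a<1$) to show $(1+a)\frac{r}{1-r}\le 1$, i.e.\ $r\le \frac{1}{2+a}$. Since this must hold for every admissible $a\in[0,1]$, the worst case is $a=1$, which gives exactly $r\le 1/3$. Thus the bound is proved for all $r\in[0,1/3]$.

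For sharpness I would test the inequality on the one-parameter family of Möbius automorphisms $\varphi_a(z)=\frac{a-z}{1-az}$ with $a\in(0,1)$. A direct expansion gives $\varphi_a(z)=a-(1-a^2)\sum_{n\ge 1}a^{n-1}z^n$, so
\[
\sum_{n\ge 0}|a_n|r^n \;=\; a + (1-a^2)\frac{r}{1-ar}.
\]
A short manipulation shows this exceeds $1$ precisely when $r>\frac{1}{1+2a}$; letting $a\to 1^-$ makes the threshold tend to $1/3$, so no $r>1/3$ can work uniformly.

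There is no serious obstacle: the only step that is not purely computational is the Schwarz--Pick-type coefficient bound $|a_n|\le 1-|a_0|^2$, and even that is standard. The essence of the argument is the observation that the extremal configuration for the majorant series is forced by the Möbius automorphisms, which simultaneously make the coefficient inequality sharp for every $n$ in a limiting sense.
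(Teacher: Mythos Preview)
The paper does not give its own proof of Theorem~A; it merely states the result and attributes the sharp constant $1/3$ to Riesz, Schur and Wiener. So there is no in-paper argument to compare against. Your overall strategy---Wiener's coefficient bound $|a_n|\le 1-|a_0|^2$, summation of the geometric series, and sharpness via the M\"obius maps $\varphi_a$---is precisely one of the classical proofs, and your computations for the majorant bound and for sharpness are correct.

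There is, however, a gap in your justification of $|a_n|\le 1-|a_0|^2$ for $n\ge 2$. Writing $g(z)=\dfrac{1}{z}\cdot\dfrac{f(z)-a_0}{1-\overline{a_0}\,f(z)}$ and invoking $|g^{(n-1)}(0)/(n-1)!|\le 1$ does \emph{not} isolate $a_n$: the $(n-1)$-st Taylor coefficient of $g$ is a nonlinear expression in $a_1,\dots,a_n$ (already for $n=2$ one finds $g'(0)=\dfrac{a_2}{1-|a_0|^2}+\dfrac{\overline{a_0}\,a_1^2}{(1-|a_0|^2)^2}$), so the bound you obtain is not $|a_n|\le 1-|a_0|^2$. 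Your argument is correct only for $n=1$, where it is exactly the Schwarz--Pick lemma. The standard repair---which the paper itself alludes to later when it speaks of ``the Schwarz--Pick inequality and the symmetrization of the analytic function''---is to average over the $n$-th roots of unity: set $G(w)$ by $G(z^n)=\dfrac{1}{n}\sum_{k=0}^{n-1}f(e^{2\pi i k/n}z)$, so that $G(0)=a_0$, $G'(0)=a_n$ and $\|G\|_\infty\le 1$; then Schwarz--Pick applied to $G$ gives $|a_n|\le 1-|a_0|^2$. With this correction your proof is complete.
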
 
Bohr proved this theorem only for $r\le 1/6.$ The constant $1/3$ was obtained in the same year independently by M. Riesz, I. Schur and F. Wiener (various proofs are collected in the appendix of the work \cite{Dixon05}). This theorem is equivalent to the inequality known as the Bohr inequality:

	$$
	\sum_{n\ge0}|a_n|r^n\le \|f\|_\infty, \quad  0\le r\le 1/3.
	$$   
	
The active study of various generalizations and modifications of the Bohr inequality began in the middle of 1990s, since P. Dixon, using the Bohr inequality, solved the long-standing problem on the characterization of Banach algebras \cite{Dixon05}. Part of  subsequent research in this area
is directed towards extending the Bohr phenomenon for functions of one variable (\cite{AKP}-\cite{Boas}, \cite{Bombb}, \cite{Bomb}, \cite{IKP}---\cite{KayumpImpr}, \cite{PVW}). The other part is directed towards generalizations of the Bohr phenomenon in multidimensional spaces and for more abstract cases (see, for example, \cite{Aizen}, \cite{BPS}, \cite{Boas}, \cite{BoasKhav}, \cite{Ann.Math.}, \cite{PaSi}, \cite{PPS}). In 2018, B. Bhowmik and N. Das applied Bohr's inequality to the
question of comparing majorant series of subordinate functions \cite{BhowDas}. In the works \cite{KayumpImpr}, \cite{PVW} estimates of the sum of majorant series and the area functional of the analytic function were obtained. For studies related to Bohr's inequality, see \cite{Aizen}-\cite{Ann.Math.}, \cite{IKP}-\cite{PPS}, \cite{Ricci}.
	 In Section 4, we will give the new estimates of the weighted majorant series in the class of bounded functions.
	
\vspace{7mm}
	\section{Majorization of subordinate functions}

To estimate the functionals from the Section \ref{intro} we will use theorems on the majorization of subordinate functions. Let us first recall the definition of subordinate functions:

	\begin{definition}
		
	We say that the analytic in  $\mathbb{D}$ function $f(z)$ is subordinated to the analytic in  $\mathbb{D}$ function $g(z)$ and write   $f\prec g$ if there exists an analytic in  $\mathbb{D}$ function  $\omega(z)$  such that $|\omega(z)|\le 1, z\in \mathbb{D},$ $\omega(0)=0 $ and
	$$f(z)=g(\omega(z)), \quad z\in \mathbb{D}.$$

	\end{definition}
	Note that if $g$ is univalent function then
	$$f \prec g \quad \Longleftrightarrow \quad f(0)=g(0) \:\: \text{and} \:\: f(\mathbb{D})\subset g(\mathbb{D}).$$
	
	In 1925, J. E.~Littlewood proved \cite{Little} the theorem on the majorization of norms in Lebesgue spaces of subordinate functions:
		
\newtheorem*{B}{Theorem B}
\begin{B}
			Let $f \prec g$ in $\mathbb{D}.$ Then for $p\ge 0,$ 
		$$\int_{0}^{2\pi}|f(re^{i\theta})|^pd\theta\le \int_{0}^{2\pi}|g(re^{i\theta})|^pd\theta, \quad  0\le r \le 1.$$
	\end{B}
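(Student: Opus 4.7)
The plan is to deduce Theorem B from the mean value property for harmonic functions, using two classical ingredients: the subharmonicity of $|g|^p$ on $\mathbb{D}$ for analytic $g$ and $p\ge 0$, and the contractivity $|\omega(z)|\le|z|$ furnished by Schwarz's lemma applied to the Schwarz function $\omega$.

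First I would fix $r\in(0,1)$ and set $u(z)=|g(z)|^p$, which is subharmonic on $\mathbb{D}$. Let $v$ denote the Poisson integral on $r\mathbb{D}$ with boundary values $u\bigl|_{|\zeta|=r}$; then $v$ is harmonic on $r\mathbb{D}$, continuous on $\overline{r\mathbb{D}}$, agrees with $u$ on the circle $|\zeta|=r$, and by the subharmonic maximum principle satisfies $u\le v$ throughout $\overline{r\mathbb{D}}$. The mean value property at the origin then gives
\begin{equation*}
v(0)=\frac{1}{2\pi}\int_0^{2\pi} v(re^{i\theta})\,d\theta=\frac{1}{2\pi}\int_0^{2\pi}|g(re^{i\theta})|^p\,d\theta.
\end{equation*}

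Next, for any $r'\in(0,r)$, Schwarz's lemma guarantees $|\omega(z)|\le|z|\le r'<r$ on $\overline{r'\mathbb{D}}$, so the composition $v\circ\omega$ is harmonic on a neighborhood of $\overline{r'\mathbb{D}}$. Applying the mean value property to $v\circ\omega$, together with $\omega(0)=0$ and the pointwise inequality $u\le v$, yields
\begin{equation*}
v(0)=v(\omega(0))=\frac{1}{2\pi}\int_0^{2\pi}v(\omega(r'e^{i\theta}))\,d\theta\ge\frac{1}{2\pi}\int_0^{2\pi}u(\omega(r'e^{i\theta}))\,d\theta=\frac{1}{2\pi}\int_0^{2\pi}|f(r'e^{i\theta})|^p\,d\theta,
\end{equation*}
where the last equality uses $f=g\circ\omega$. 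Combining the two displays and sending $r'\to r^{-}$ by monotone convergence gives the claim for $r<1$; the case $r=1$ follows by letting $r\to 1^{-}$, since both sides are non-decreasing in $r$ by subharmonicity.

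The main delicate point is ensuring that $v\circ\omega$ is actually harmonic on a neighborhood of $\overline{r'\mathbb{D}}$; this is precisely why the argument is set up with a strict gap $r'<r$ and closed by a limiting step, rather than being applied directly at $r'=r$, where $\omega$ could in principle map boundary points to the circle $|\zeta|=r$ on which $v$ is only continuous. The other small technicality is verifying that $|g|^p$ is subharmonic even where $g$ vanishes (standard: $\log|g|$ is subharmonic, hence so is $e^{p\log|g|}=|g|^p$), but this is routine.
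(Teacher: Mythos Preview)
Your argument is correct and is essentially the classical proof of Littlewood's subordination theorem: use the least harmonic majorant $v$ of $|g|^p$ on $r\mathbb{D}$, pull it back through the Schwarz map $\omega$, and compare means at the origin. The handling of the gap $r'<r$ is appropriate, and the subharmonicity of $|g|^p$ for all $p\ge 0$ is justified correctly. One cosmetic point: the passage $r'\to r^{-}$ is not literally ``monotone convergence'' (the integrand $|f(r'e^{i\theta})|^p$ need not be pointwise monotone in $r'$); what you are really using is either dominated convergence on the compact disk $\overline{r\mathbb{D}}$, or the fact that integral means of a subharmonic function are continuous and nondecreasing in the radius. This does not affect the validity of the argument.

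There is nothing to compare your proof against in the paper itself: Theorem~B is merely quoted from Littlewood \cite{Little} as a classical result and is not proved in the paper. It is used only to motivate Goluzin's weighted inequality (Theorem~C), which in turn underlies Theorem~1.
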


For $p=2$, it follows from this theorem and from Parseval’s equality that
if $f(z)=\sum_{n\ge 0}a_nz^n, \: \: g(z)=\sum_{n\ge 0}b_nz^n$ and $f \prec g$ in $\mathbb{D},$ then
		$$\sum_{n\ge 0}|a_n|^2r^{2n}\le \sum_{n\ge 0}|b_n|^2r^{2n}, \quad 0\le r \le 1.$$

In 1951, G. Goluzin obtained a weighted analogue of  this inequality:
		
		\vspace{2mm}
	
	\newtheorem*{C}{Theorem C}
	\begin{C}[\cite{Gol}]\label{thGoluzin}
		Let $f(z)=\sum_{n\ge 0}a_nz^n, \: g(z)=\sum_{n\ge 0}b_nz^n$  and $f\prec g$ in $\mathbb{D}.$ Then for any non-increasing sequence $\lambda_n\ge 0,$
		\begin{equation*} \label{2}
			\sum_{n\ge 1}\lambda_n|a_n|^2\le  \sum_{n\ge 1}\lambda_n|b_n|^2.
		\end{equation*}
	\end{C}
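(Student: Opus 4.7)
I would split the argument into two steps: first, prove the partial-sum inequality $\sum_{k=1}^n |a_k|^2 \le \sum_{k=1}^n |b_k|^2$ for every $n\ge 1$; then, deduce the weighted statement by summation by parts against the non-increasing weights $\lambda_n$.

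For the partial-sum inequality, the crucial observation is that $\omega(0)=0$ forces $\omega(z)^k$ to vanish to order at least $k$ at the origin, so the Taylor expansion of $(g-g_n)\circ\omega$ starts at degree $n+1$, where $g_n(z)=\sum_{k=0}^n b_k z^k$. Denoting by $P_n$ the orthogonal projection in $H^2(\mathbb{D})$ onto polynomials of degree at most $n$, this gives the identity $P_n f = P_n(g_n\circ\omega)$. Since $P_n$ is a contraction on $H^2$ and $g_n\circ\omega \prec g_n$, while $g_n$ is a polynomial (so its boundary $L^2$ norm is harmless), Theorem B with $p=2$ together with Parseval's identity yields
$$\sum_{k=0}^n |a_k|^2 \;=\; \|P_n f\|_{H^2}^2 \;\le\; \|g_n\circ\omega\|_{H^2}^2 \;\le\; \|g_n\|_{H^2}^2 \;=\; \sum_{k=0}^n |b_k|^2,$$
and subtracting $|a_0|^2=|b_0|^2$ delivers the partial-sum inequality. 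Observe that no integrability assumption on $g$ itself enters, because we only ever apply Theorem B to the polynomial $g_n$.

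Setting $S_n^a=\sum_{k=1}^n |a_k|^2$ and $S_n^b=\sum_{k=1}^n |b_k|^2$, so that $S_n^a\le S_n^b$ by the previous step, Abel summation gives, for every $N\ge 1$,
$$\sum_{n=1}^N \lambda_n |a_n|^2 \;=\; \lambda_N S_N^a + \sum_{n=1}^{N-1}(\lambda_n-\lambda_{n+1}) S_n^a \;\le\; \lambda_N S_N^b + \sum_{n=1}^{N-1}(\lambda_n-\lambda_{n+1}) S_n^b \;=\; \sum_{n=1}^N \lambda_n |b_n|^2,$$
using $\lambda_N\ge 0$ and $\lambda_n\ge\lambda_{n+1}$. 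Letting $N\to\infty$ yields the desired inequality; if the right-hand side is infinite, the conclusion is trivial.

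The main obstacle is the identity $P_n f = P_n(g_n\circ\omega)$. It encodes the structural fact that a Schwarz function vanishing at the origin cannot transfer high-order Taylor information of $g$ down into the low-order coefficients of $f$, and it is precisely what permits invoking Theorem B on the well-behaved polynomial $g_n$ in place of $g$. Everything else — contractivity of an orthogonal projection and summation by parts — is routine.
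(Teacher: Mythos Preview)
The paper does not supply its own proof of Theorem~C; it is quoted from Goluzin's 1951 paper \cite{Gol} and used thereafter as a black box. So there is no in-paper argument to compare yours against.

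That said, your proof is correct and is in fact the standard route to Goluzin's inequality. The identity $P_n f = P_n(g_n\circ\omega)$ is justified exactly as you indicate: since $\omega(0)=0$, each power $\omega^k$ vanishes to order at least $k$ at the origin, so the Taylor coefficients of $f=g\circ\omega$ up to degree $n$ coincide with those of $\sum_{k\le n} b_k\omega^k = g_n\circ\omega$. Applying Theorem~B with $p=2$ to the polynomial $g_n$ sidesteps any integrability issue with $g$ itself and yields the partial-sum inequality $\sum_{k=1}^n|a_k|^2\le\sum_{k=1}^n|b_k|^2$ (often called Rogosinski's inequality). The Abel summation step is routine and correctly handles the passage $N\to\infty$, including the case where the right-hand side diverges.
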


\vspace{2mm}
From this theorem Goluzin obtained inequalities for the areas of the images of analytic functions, as well as the inequality for the norms of derivatives in the space $L^2$.
The following statement follows from Theorem C:

\vspace{2mm}
	\newtheorem*{0}{Proposition 1} \label{0}
	\begin{0}
		Let $c_n>0, \: n\ge 1,\:$ $f(z)=\sum_{n\ge 0}a_nz^n, \: g(z)=\sum_{n\ge 0}b_nz^n$  and $f\prec g$ in $\mathbb{D}.$ Then
		$$\sum_{n\ge 1}c_n|a_n|^2r^{2n}\le \sum_{n\ge 1}c_n|a_n|^2r^{2n}, \quad r^2\le \inf_{n\ge 1}\dfrac{c_n}{c_{n+1}}.$$
	\end{0}

\vspace{2mm}

Let us fix the sequence $c_n>0, \: n\ge 1.$ We define the function $G(r)$ as follows: $$G(r):=\sup_{f\prec g} \dfrac{\sum_{n\ge 1}c_n|a_n|^2r^{2n}}{\sum_{n\ge 1}c_n|b_n|^2r^{2n}},$$ where $f(z)=\sum_{n\ge 0}a_nz^n, \: g(z)=\sum_{n\ge 0}b_nz^n.$ 

\vspace{4mm}
The following statement generalizes theorem from [\cite{Reich}, P. 260--261] (we obtain its proof  in a similar way):

\vspace{2mm}
\begin{theorem}\label{G(r)}
	Let $r\ge 0,\:$  $s\ge 2, \: c_n>0 \:(n\ge 1),$ $c_1=1$ and
	\begin{equation}\label{Cond1}
		r^2\le c_nr^{2n} \le c_sr^{2s}, \quad 1\le n<s.
		\end{equation}
	
	Then $$G(r)={c_s}r^{2s-2}, \quad \inf_{n\ge s-1}\dfrac{c_n}{c_{n+1}}\le r^2 \le \inf_{n\ge s}\dfrac{c_n}{c_{n+1}}.$$ The supremum of $G(r)$ is attained at the following functions
	
	\vspace{2mm}
	(i) If $r^2<\inf_{n\ge 1}\dfrac{c_n}{c_{n+1}},$ then $\omega(z)=e^{i\phi} z$;
	
	\vspace{2mm}
	(ii) If $r^2=\inf_{n\ge 1}\dfrac{c_n}{c_{n+1}}$, then
	$$
	\omega(z)=e^{i\phi} z
	$$
	or
	$$
	g(z)=C_0+C_1 z, \quad f(z)=C_0+e^{i\phi}C_1  z^2 ;
	$$
	\hspace{5mm}(iii) If $ \inf_{n\ge s-1}\dfrac{c_n}{c_{n+1}}<r^2<\inf_{n\ge s}\dfrac{c_n}{c_{n+1}}$
	and $s \geq 2$, then
	$$
	g(z)=C_0+C_1 z, \quad f(z)=C_0+e^{i\phi}C_1  z^s;
	$$
	\hspace{4mm} (iv) If $r^2=\inf_{n\ge s}\dfrac{c_n}{c_{n+1}}$
	and $
	s \geq 2
	$, then
	$$
	g(z)=C_0+C_1 z, \quad  f(z)=C_0+e^{i\phi}C_1  z^s
	$$
	or
	$$
	g(z)=C_0+C_1 z, \quad f(z)=C_0+e^{i\phi}C_1 z^{s+1}.
	$$
\end{theorem}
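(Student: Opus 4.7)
The plan is to prove the upper bound $G(r)\le c_sr^{2s-2}$ by applying Goluzin's Theorem~C to a carefully truncated weight, and then to verify attainment by direct substitution of the extremal pairs listed in (i)--(iv). Under hypothesis~(\ref{Cond1}) together with the assumed range of $r^2$, the sequence $\{c_nr^{2n}\}$ is not monotone (it peaks at $n=s$), so Theorem~C cannot be applied with $\lambda_n=c_nr^{2n}$ directly. It is, however, non-increasing for $n\ge s$ (by the assumption $r^2\le\inf_{n\ge s}c_n/c_{n+1}$) and bounded above by $c_sr^{2s}$ for $n\le s$ (the upper part of~(\ref{Cond1})).

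I would therefore apply Theorem~C with the non-increasing majorant
\[
\lambda_n=\begin{cases} c_sr^{2s}, & 1\le n\le s-1,\\ c_nr^{2n}, & n\ge s,\end{cases}
\]
which yields
\[
c_sr^{2s}\sum_{n=1}^{s-1}|a_n|^2+\sum_{n\ge s}c_nr^{2n}|a_n|^2\;\le\; c_sr^{2s}\sum_{n=1}^{s-1}|b_n|^2+\sum_{n\ge s}c_nr^{2n}|b_n|^2.
\]
The upper part of~(\ref{Cond1}), $c_nr^{2n}\le c_sr^{2s}$ for $n<s$, shows that the left-hand side dominates $\sum_{n\ge 1}c_nr^{2n}|a_n|^2$. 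The lower part, $r^2\le c_nr^{2n}$ for $n<s$ (which, specialized to $n=1$, also gives $c_sr^{2s-2}\ge 1$), then permits a term-wise comparison bounding the right-hand side by $c_sr^{2s-2}\sum_{n\ge 1}c_nr^{2n}|b_n|^2$; combining gives $G(r)\le c_sr^{2s-2}$. Attainment is then checked by direct substitution: in case~(iii), $g(z)=C_0+C_1z$ and $\omega(z)=e^{i\phi}z^s$ yield $f(z)=C_0+C_1e^{i\phi}z^s$, and the ratio collapses at once to $c_sr^{2s}/(c_1r^2)=c_sr^{2s-2}$. Case~(i) is $s=1$ with $\omega(z)=e^{i\phi}z$, giving $|a_n|=|b_n|$; the boundary cases (ii) and (iv) admit a second extremal with $\omega$ a higher monomial, whose ratio matches $c_sr^{2s-2}$ exactly because of the boundary identities $r^2=c_1/c_2$ and $r^2=c_s/c_{s+1}$ respectively.

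The step I expect to be the main obstacle is not the upper bound itself but the characterization of extremizers---the ``only'' part of the statement. One must trace the equality case through both Theorem~C and the term-wise comparison above in order to conclude that $g$ must be linear (with a single nonzero non-constant coefficient $b_1$) and that $\omega$ must reduce to a pure monomial $e^{i\phi}z^s$ (or $e^{i\phi}z^{s+1}$ at the boundary). The delicate bookkeeping of which strict inequalities in $\inf_{n\ge s-1}c_n/c_{n+1}\le r^2\le\inf_{n\ge s}c_n/c_{n+1}$ are active is precisely what produces the four sub-cases (i)--(iv).
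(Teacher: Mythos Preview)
Your approach is essentially identical to the paper's: you use the very same truncated weight sequence $\lambda_n=c_sr^{2s}$ for $n<s$ and $\lambda_n=c_nr^{2n}$ for $n\ge s$, apply Goluzin's Theorem~C, and then use the two halves of~(\ref{Cond1}) to sandwich the weighted sums. The paper presents this as a subtraction argument (writing $\sum c_n(|a_n|^2-|b_n|^2)r^{2n}$ and bounding it by $(c_sr^{2s-2}-1)\sum c_n|b_n|^2r^{2n}$), whereas you do a two-sided term-wise comparison; unwinding the algebra shows the two are the same estimate.

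One clarification: you flag the ``only'' part of the extremal characterization as the main obstacle, but the theorem does not assert uniqueness. The statement only says the supremum \emph{is attained} at the listed functions; Remark~2 following the theorem explicitly points out that uniqueness is known (from Reich) only in the special case $c_n=n$. Accordingly, the paper's proof, like yours, establishes just the upper bound $G(r)\le c_sr^{2s-2}$ and leaves the attainment to the obvious substitution you already sketched---no equality-case analysis of Theorem~C is needed.
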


\vspace{3mm}

\textbf{Remark 1.} It is easy to check that if $r^2\ge 1/c_2$ and the sequence $\dfrac{c_{n}}{c_{n+1}}$ is non-decreasing for $n\ge 2,$ then the condition \eqref{Cond1} holds.

\vspace{2mm}
\textbf{Remark 2.} In the work  \cite{Reich} it was shown that if $c_n=n,$ then the extremal functions in Theorem \ref{G(r)} are unique in this problem.

\vspace{2mm}

\textbf{Remark 3.} In the work \cite{AreaQuasi} the last theorem for $c_n=n$ was generalized to the case of quasi-subordinate functions. The asymptotic behavior of the function $G(r), \: r\rightarrow 1,$ for $c_n=n$ was studied in the article \cite{GrowthArea}.

\vspace{3mm}

\begin{proof}[Proof of Theorem \ref{G(r)}]
The sum $\sum_{n\ge 1} c_n\left|a_n\right|^2 r^{2 n}$ can be expressed in the following form:
	\begin{multline*}
		\left[c_s r^{2 s} \sum_{n= 1}^{s-1} |a_n|^2+\sum_{n\ge s} c_n\left|a_n\right|^2 r^{2 n}\right]-\left[c_s r^{2 s} \sum_{n=1}^{s-1}\left|a_n\right|^2-\sum_{n=1}^{s-1} c_n\left|a_n\right|^2 r^{2 n}\right] \\
		\\
		=\sum_{n\ge 1} \lambda_n^{(s)}\left|a_n\right|^2-\sum_{n=1}^{s-1}\left(c_s r^{2 s}-c_n r^{2 n}\right)\left|a_n\right|^2,		
	\end{multline*}
	where
	$$
	\lambda_n^{(s)}= \begin{cases}c_s r^{2 s}, \quad 1 \leq n \leq s-1, \\ c_n r^{2 n}, \quad n \geq s .\end{cases}
	$$
Similarly, 
	$$	\sum_{n\ge 1} c_n\left|b_n\right|^2 r^{2 n} 
	=\sum_{n\ge 1} \lambda_n^{(s)}\left|b_n\right|^2-\sum_{n=1}^{s-1}\left(c_s r^{2 s}-c_n r^{2 n}\right)\left|b_n\right|^2.$$
By the conditions of theorem, $\inf_{n\ge s-1}\dfrac{c_n}{c_{n+1}}\le r^2 \le \inf_{n\ge s}\dfrac{c_n}{c_{ n+1}},$ therefore the sequence $ (\lambda_n^{(s)})_{n\ge 1}$ is non-increasing, so we can use Theorem C:
	$$\sum_{n\ge 1} \lambda_n^{(s)}\left|a_n\right|^2\le \sum_{n\ge 1} \lambda_n^{(s)}\left|b_n\right|^2.$$
Thus,

\begin{flushleft}
$\sum_{n\ge 1} c_n(\left|a_n\right|^2-|b_n|^2) r^{2 n}$
\end{flushleft}
			\begin{equation*}
			\begin{aligned}	\le \sum_{n=1}^{s-1}\left(c_s r^{2 s}-c_n r^{2 n}\right)\left|b_{n}\right|^{2}-\sum_{n=1}^{s-1}\left(c_s r^{2 s}-c_n r^{2 n}\right)\left|a_{n}\right|^{2}
		\le \sum_{n=1}^{s-1}\left(c_s r^{2 s}-c_n r^{2 n}\right)\left|b_{n}\right|^{2}
	\\ \\ =\left(c_s r^{2 s-2}-1\right) \sum_{n=1}^{s-1} c_n\left|b_{n}\right|^{2} r^{2 n}-c_s r^{2 s-2} \sum_{n=1}^{s-1}\left(c_n r^{2 n}-r^{2}\right)\left|b_{n}\right|^{2} 
		\\
		\\
		\le \left(c_s r^{2 s-2}-1\right)	\sum_{n\ge 1} c_n|b_n|^2 r^{2 n}.
	\end{aligned}
	\end{equation*}
The last inequality follows from the condition \eqref{Cond1}. Thus, $\dfrac{\sum_{n \ge 1}c_n|a_n|^2r^{2n}}{\sum_{n \ge 1}c_n|b_n|^2r^{2n}}\le c_sr^{2s-2}$ for all $f$ and $g$ such that $f\prec g.$ Passing to the supremum in this inequality, we obtain  $G(r)\le c_sr^{2s-2}.$
\end{proof}

\vspace{5mm}
\section{Area functional estimates} \label{Sec3}
Using Theorem \ref{G(r)} for $c_n=n, \: n\ge 1,$ E.~Reich obtained the following corollary from it. Let $f(0)=0,$ then
$$S_rf\le \pi sr^{2s} \|f\|_{\infty}^2, \quad \dfrac{s-1}{s}\le r^2 \le \dfrac{s}{ s+1}.$$
This fact becomes obvious if we notice that if $\|f\|_{\infty}\le 1$ and $f(0)=0,$ then $f(z)\prec z$ in the disk $\mathbb {D}.$ We generalize this statement for lacunary series of the form $\sum_{n\ge 0}a_{mn+p}z^{mn+p}, \: m\ge p\ge 0,$ and obtain an interesting corollary for the standard case $m=1, \: p=0.$

\vspace{2mm}
\begin{theorem}\label{Area}
Let $s \in \mathbb{N}_0, \: m\ge p\ge 0, \: f(z)=\sum_{n\ge 0}a_{mn+p}z^{mn+p}$ and $r\ge 0,$  then
$$S_rf\le \pi (ms+p) r^{2(ms+p)} \|f\|_{\infty}^2, \quad \dfrac{m(s-1)+p}{ms+p}\le r^{2m} \le \dfrac{ms+p}{m(s+1)+p}.$$	
The functions $f(z)=Cz^{ms+p}, \: C\in \mathbb{C},$ are extremal.
\end{theorem}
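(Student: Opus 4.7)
The plan is to reduce this lacunary estimate to a statement about a bounded analytic function of one variable. Writing $h(w):=\sum_{j\ge 0}a_{mj+p}w^{j}$ so that $f(z)=z^{p}h(z^{m})$, and examining $|f(z)|=|z|^{p}|h(z^{m})|$ on circles $|z|=\rho$ as $\rho\to 1$, one obtains $\|h\|_{\infty}=\|f\|_{\infty}$. With $R:=r^{m}$ and $b_{j}:=a_{mj+p}$, the area formula becomes
\begin{equation*}
S_{r}f=\pi\sum_{n\ge 1}n|a_{n}|^{2}r^{2n}=\pi r^{2p}\sum_{j\ge 0}(mj+p)|b_{j}|^{2}R^{2j}.
\end{equation*}

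The main step, and the one I expect to be the principal obstacle, is to show the pointwise monotonicity $(mj+p)R^{2j}\le(ms+p)R^{2s}$ for every $j\in\mathbb{N}_{0}$ whenever $R^{2}\in\bigl[\tfrac{m(s-1)+p}{ms+p},\tfrac{ms+p}{m(s+1)+p}\bigr]$. Setting $x:=m/(ms+p)$ and splitting on $j<s$ versus $j>s$, the first case follows from $R^{2(s-j)}\ge(1-x)^{s-j}\ge 1-(s-j)x=(mj+p)/(ms+p)$, and the second from the mirror estimate $R^{-2(j-s)}\ge(1+x)^{j-s}\ge 1+(j-s)x=(mj+p)/(ms+p)$; both are applications of Bernoulli's inequality and together they also cover the boundary instances $j=0$ and $s=0$. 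This monotonicity is essentially the content of condition \eqref{Cond1} of Theorem \ref{G(r)} applied to the non-decreasing weights $c_{j}=(mj+p)/(m+p)$.

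Given the pointwise bound, Parseval's identity for the bounded function $h$ yields $\sum_{j\ge 0}|b_{j}|^{2}\le\|h\|_{\infty}^{2}$, whence
\begin{equation*}
\sum_{j\ge 0}(mj+p)|b_{j}|^{2}R^{2j}\le(ms+p)R^{2s}\sum_{j\ge 0}|b_{j}|^{2}\le(ms+p)R^{2s}\|f\|_{\infty}^{2},
\end{equation*}
and multiplication by $\pi r^{2p}$ produces the announced inequality. The extremality of $f(z)=Cz^{ms+p}$ is immediate by direct substitution, since then every intermediate estimate collapses to an equality: $h(w)=Cw^{s}$ satisfies $|b_{s}|^{2}=|C|^{2}=\|h\|_{\infty}^{2}$ and all other $b_{j}$ vanish.
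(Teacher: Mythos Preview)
Your argument is correct and considerably simpler than the paper's. The paper proceeds by fixing $a=|a_p|$, subordinating $h(w)=\sum_{n\ge 0}a_{mn+p}w^{n}$ to the M\"obius map $\dfrac{w+a}{1+aw}$, applying Theorem~C and Theorem~\ref{G(r)} with weights $c_n=(mn+p)/(m+p)$, and then spending most of the proof maximizing the resulting expression over $a\in[0,1]$; this last step is a fairly delicate one-variable analysis carried out by induction on $s$. You sidestep all of this: once you have the pointwise bound $(mj+p)R^{2j}\le (ms+p)R^{2s}$ for every $j\ge 0$ (your Bernoulli computation), Parseval's inequality $\sum_{j\ge 0}|b_j|^2\le\|h\|_\infty^2$ finishes the job immediately, with no need to isolate the constant term or optimize over it. The paper's route showcases the subordination machinery developed in Section~2 and ties the result to the framework of Theorem~\ref{G(r)}; your route is self-contained and shows that for this particular statement the heavier tools are unnecessary.
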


\vspace{2mm}
\begin{cons}
Let $s\in \mathbb{N}$ and $f(z)=\sum_{n\ge 0}a_nz^n,$ then
\begin{equation} \label{Cor1}S_rf\le \pi sr^{2s} \|f\|_{\infty}^2, \quad \dfrac{s-1}{s}\le r^2 \le \dfrac{s}{s+1}.
\end{equation}
All extremal functions are of the form $f(z)=Cz^s, \: C\in \mathbb{C}.$
\end{cons}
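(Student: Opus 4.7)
The plan is to specialize Theorem \ref{Area} to the parameters $m=1$ and $p=0$. With these choices the lacunary expansion $f(z)=\sum_{n\ge 0}a_{mn+p}z^{mn+p}$ reduces to a general power series $\sum_{n\ge 0}a_nz^n$, the multiplier $ms+p$ in the bound becomes $s$, the exponent $2(ms+p)$ becomes $2s$, and the range condition $\frac{m(s-1)+p}{ms+p}\le r^{2m}\le \frac{ms+p}{m(s+1)+p}$ collapses exactly to $\frac{s-1}{s}\le r^2\le \frac{s}{s+1}$. The admissibility assumption $m\ge p\ge 0$ of Theorem \ref{Area} is trivially met, so \eqref{Cor1} follows at once, and the restriction $s\in\mathbb{N}$ (rather than $\mathbb{N}_0$) merely excludes the degenerate case $s=0$, where the $r$-interval shrinks to the single point $r=0$.

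The second claim --- that \emph{every} extremal is of the form $f(z)=Cz^s$ --- does not follow from Theorem \ref{Area} as stated, which only exhibits a family of extremals without asserting uniqueness. For this I would invoke Remark 2, which records Reich's observation in \cite{Reich} that for $c_n=n$ the extremals in Theorem \ref{G(r)} are unique. Since the specialization $m=1$, $p=0$ of Theorem \ref{Area} rests on precisely the $c_n=n$ case of the ratio $G(r)$, the uniqueness transfers verbatim. A direct computation $S_r(Cz^s)=\pi s r^{2s}|C|^2$ together with $\|Cz^s\|_\infty=|C|$ confirms that these monomials saturate \eqref{Cor1} throughout the entire interval of admissible $r$.

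In short, the corollary carries essentially no new content beyond Theorem \ref{Area}; the main obstacle was already overcome in that theorem (via Theorem \ref{G(r)} and Goluzin's Theorem C), and the only remaining tasks at this step are bookkeeping of the constants and the appeal to the uniqueness statement in Remark 2.
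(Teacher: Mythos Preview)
Your specialization $m=1$, $p=0$ of Theorem \ref{Area} is exactly how the paper intends the corollary to be read (it is stated without separate proof, directly after Theorem \ref{Area}), and the inequality \eqref{Cor1} together with the interval for $r$ drops out immediately.

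For the uniqueness of the extremals your appeal to Remark 2 is the right ingredient, but it does not transfer quite ``verbatim'' as you assert. Reich's uniqueness concerns the ratio $G(r)$ for a \emph{given} subordination pair $f\prec g$, whereas the proof of Theorem \ref{Area} first subordinates $f$ to the M\"obius map $(z+a)/(1+az)$ with $a=|a_0|$ and then optimizes over $a\in[0,1]$. To conclude that \emph{every} extremal is of the form $Cz^s$ you must also check that this optimum is achieved \emph{only} at $a=0$ (equivalently $a_0=0$). For $p=0$ the auxiliary function from the proof of Theorem \ref{Area} becomes $h(x,b)=sb^s(1-x)^2/(1-bx)^2$, which is strictly decreasing in $x=a^2$ on $[0,1]$, so this is immediate; only then does Remark 2, applied with $g(z)=z$, force $f(z)=e^{i\phi}z^s$ and hence (after undoing the normalization $\|f\|_\infty=1$) the claimed form $Cz^s$. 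This is a small bookkeeping step, but your phrase ``rests on precisely the $c_n=n$ case of the ratio $G(r)$'' elides it.
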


The last inequality seems very simple and classical. It would be interesting to obtain its analogues in other classes of analytic functions.

\vspace{2mm}
\begin{cons} \label{cons3}
	Let $f(z)=\sum_{n\ge 0}a_nz^n,$ then
$$
	S_rf\le \pi  \|f\|_{\infty}^2, \quad 0\le  r\le 1/\sqrt[6]{3}.
$$
The number $1/\sqrt[6]{3}$  is the best possible. Extremal functions are of the form $f(z)=Cz^3, \: C\in \mathbb{C}.$
\end{cons}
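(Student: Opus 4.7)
The plan is to derive this as an immediate specialization of inequality~\eqref{Cor1}. First I would observe that since $S_rf=\pi\sum_{n\ge 1}n|a_n|^2r^{2n}$ is non-decreasing in $r$, it suffices to verify $S_rf\le \pi\|f\|_\infty^2$ at the single endpoint $r=1/\sqrt[6]{3}$. At this point $r^2=3^{-1/3}$ satisfies $2/3\le r^2\le 3/4$, which is exactly the range associated with $s=3$ in~\eqref{Cor1}; so I can apply that corollary with $s=3$ and obtain
\[
S_rf\le 3\pi r^6\|f\|_\infty^2 = 3\pi\cdot\frac{1}{3}\|f\|_\infty^2 = \pi\|f\|_\infty^2,
\]
from which the desired inequality follows on the whole interval $0\le r\le 1/\sqrt[6]{3}$ by monotonicity.

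For sharpness, I would test the candidate extremal $f(z)=Cz^3$, for which $\|f\|_\infty=|C|$ and $S_rf=3\pi|C|^2 r^6$. The bound is attained with equality precisely at $r=1/\sqrt[6]{3}$, while for any strictly larger $r$ the value $3\pi|C|^2 r^6$ exceeds $\pi|C|^2$. This confirms simultaneously that $1/\sqrt[6]{3}$ cannot be replaced by any larger constant and that the extremals are exactly the cubic monomials $f(z)=Cz^3$.

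There is no real obstacle in this argument; the only point requiring attention is the arithmetic verification that the crossover $r^2=3^{-1/3}\approx 0.693$ lies in the interval assigned to $s=3$ rather than to $s=2$ or $s=4$. As a sanity check, one observes that on $0\le r^2\le 1/2$ the $s=1$ bound gives $\pi r^2\|f\|_\infty^2\le (\pi/2)\|f\|_\infty^2$, and on $1/2\le r^2\le 2/3$ the $s=2$ bound gives $2\pi r^4\|f\|_\infty^2\le (8\pi/9)\|f\|_\infty^2$; both are strictly smaller than $\pi\|f\|_\infty^2$ throughout their subintervals, so the $s=3$ regime is genuinely the binding one and no gluing argument is required between the regimes.
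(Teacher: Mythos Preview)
Your argument is correct and actually cleaner than the paper's own proof. The paper introduces the auxiliary functions $g(x)=x^{-1/(2x)}$ and $h(x)=\sqrt{x/(x+1)}$ and determines the largest integer $s$ for which the full $s$-interval of~\eqref{Cor1} yields $sr^{2s}\le 1$ throughout; this leads to locating the crossing $g(x)=h(x)$ at $x\approx 2.293$ and concluding that the cut-off happens inside the $s=3$ window, at $r=g(3)=1/\sqrt[6]{3}$. Your monotonicity reduction bypasses all of this: since $S_rf$ is non-decreasing in $r$, it suffices to check the single value $r=1/\sqrt[6]{3}$, observe that $r^2=3^{-1/3}\in[2/3,3/4]$, and apply~\eqref{Cor1} with $s=3$. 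The paper's approach has the advantage of explaining \emph{why} $s=3$ is the critical index (it exhibits the mechanism by which the intervals and the threshold curves interact), whereas yours is more economical but requires one to have guessed the right endpoint in advance. Your sanity check on the $s=1,2$ regimes is not logically necessary given the monotonicity, but it nicely recovers part of what the paper's systematic approach provides.

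One small caveat: when you write that this ``confirms \dots that the extremals are exactly the cubic monomials,'' your computation only shows that $f(z)=Cz^3$ attains equality, not that no other function does. For the uniqueness direction you should invoke the uniqueness clause of Corollary~1 (which in turn rests on the extremal characterization in Theorem~\ref{G(r)}). The paper's own proof of this corollary is likewise silent on this point, so this is not a defect relative to the original, but it is worth being explicit.
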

The inequality from Corollary \ref{cons3} is an analogue of Bohr's inequality (Theorem A) for the area functional.

\vspace{2mm}
\begin{proof}[Proof of Corollary \ref{cons3}]
	Let $g(x)=\dfrac{1}{\sqrt[2x]{x}}$ and $h(x)=\sqrt{\dfrac{x}{x+1}}, \: x\ge 1.$ 
It follows from the inequality \eqref{Cor1} that $$S_rf\le \pi \|f\|^2_{\infty}, \quad r\le \sup_{s\in \mathbb{N}}\{g(s+1): \: g(i)\ge h(i), \: 1\le i\le s-1 \:\: \text{and} \:\: g(s)>h(s)\}.$$  It is easy to check that $g(x)=h(x) \Longleftrightarrow x=y=2.29317...,\:$ $g(x)>h(x), \: 1<x<y,$ and $g(x)<h(x), \: y<x<\infty.$ Thus, the supremum is attained at $s=2.$
\end{proof}

\vspace{3mm}
\begin{proof}[Proof of Theorem \ref{Area}]
Let $\|f\|_{\infty}\le 1.$ It follows from Schwarz lemma that $\big|\sum_{n\ge 0}a_{mn+p}z^{mn}\big|\le 1, \: z\in \mathbb{D}.$ Without loss of generality, assume that $a=a_p>0.$ Let $w=z^m,$ then
	$$\sum_{n\ge 0}a_{mn+p}w^n \prec \dfrac{w+a}{1+{a}w}=a+(a-1/a)\sum_{n\ge 0}(-aw)^n.$$
	Let $\rho=r^m.$ Using Theorem C for $s=0$ and $s=1$ and Theorem \ref{G(r)}  for $s\ge 2$ with weights $c_n=\dfrac{mn+p}{m+p}, \: n\ge 1$ (which satisfy the conditions of Remark 1), we obtain that if
	$$\dfrac{m(s-1)+p}{ms+p}\le \rho^2\le \dfrac{ms+p}{m(s+1)+p}, $$
	then for $s=0$ and $s=1,$
		\begin{equation*}
		\dfrac{S_rf}{\pi}=r^{2p}\cdot \sum_{n\ge 0}(mn+p)|a_{mn+p}|^2\rho^{2n}\le
		r^{2p}\cdot \Big(pa^2+\dfrac{(1-a^2)^2(m+p(1-(a\rho)^2))}{(1-(a\rho)^2)^2}\rho^{2s}\Big),
	\end{equation*}
	and for $s\ge 2,$
	\begin{equation*}
	\dfrac{S_rf}{\pi}=r^{2p}\cdot \sum_{n\ge 0}(mn+p)|a_{mn+p}|^2\rho^{2n}\le
	 r^{2p}\cdot \Big(pa^2+\dfrac{(1-a^2)^2(m+p(1-(a\rho)^2))}{(1-(a\rho)^2)^2}\dfrac{sm+p}{m+p}\rho^{2s}\Big).
	\end{equation*}
Let us denote $x=a^2,$ 
$$I_{0}=\Big[0,   \dfrac{m}{m+p}\Big], \quad I_{s}=\Big[\dfrac{m(s-1)+p}{ms+p}, \dfrac{ms+p}{m(s+1 )+p}\Big],\: s\ge 1.$$ Fix $b\in [0, 1)$ and choose $s\ge 0,$ such that $b\in I_{s}.$ For every fixed $b\in [0,1)$ we find the maximums of the functions

$$
h(x, b)=\begin{aligned}
	\begin{cases}
		px+\dfrac{(1-x)^2(m+p(1-bx))}{(1-bx)^2}b, \quad
		b\in I_{0},
\\  \\
px+\dfrac{(1-x)^2(m+p(1-bx))}{(1-bx)^2}\dfrac{sm+p}{m+p}b^s,  \quad b\in I_{s}, \: s\ge 1.
	\end{cases}
\end{aligned}
$$
for $x\in [0,1].$

\vspace{2mm}
Let $b\in I_{0}.$ Consider the function $g,$ defined as follows:

\begin{flushleft}
$g(x):=(1-bx)^3h_x^{\prime}(x,b)=p(1-bx)^3$
\end{flushleft}
\begin{equation*}
 \begin{aligned}
	+b\big(-(1-x)(1-bx)(2(m+p(1-bx))+pb(1-x))+2b(1-x)^2(m+p(1-bx))\big) \\ \\
  = -6b^2px^2+b(5p+2m+2pb-2mb-pb^2)\cdot x+ p+b(-2m-2p+2mb+pb)
 \end{aligned}
\end{equation*}

Let us calculate the values of the function $g$ at the points $x=1$ and $x=0:$
\begin{multline*}
		g(1)=p(1-b)(b^2+4b+1)\ge 0, \hspace{10mm}
g(0)=(b-1)(b(p+2m)-p)\ge 0 \:\: \Longleftrightarrow  \:\: b \le \dfrac{p}{p+2m}. 
\end{multline*}
	Since the function $g$ is concave, then for $b\le \dfrac{p}{p+2m},$ inequality $h_x^{\prime}(x,b)\ge 0$ holds for all $x\in [0, 1].$ In this case, the function $h$ attains its maximum at the point $x=1.$ If $b>\dfrac{p}{p+2m},$ then there exists $\phi(b),$ such that $$h_x^{\prime}(x,b)\le 0, \:\: 0\le x< \phi(b) \:\:\:\: \text{and} \:\:\:\: h_x^{\prime}(x,b)\ge 0, \:\: \phi(b)\le x\le 1.$$
	In this case, we need to compare the  values  of the function $h$ at $x=0$ and $x=1$:
	$$h(0,b)=(m+p)b, \quad h(1,b)=p.$$
	Thus, we see that if $b\le \dfrac{p}{m+p},$ then the function $h$ attains its maximum at the point $x=1.$ Therefore,
	$$S_rf \le \pi pr^{2p}, \quad r^{2m}\le\frac{p}{m+p}.$$
For $b\in I_{1}$  the function $h(x,b)$ has the same form as for $b\in I_{0}.$ Therefore, the function $h(x,b), \: b\in I_{1}, $ reaches its maximum at the point $x=0.$ Thus,
	$$ S_rf\le \pi (m+p)r^{2(m+p)}, \quad \dfrac{p}{m+p}\le r^{2m} \le \dfrac{m+p}{2m+p}.$$

Suppose that the function $h(x,b), \:b\in I_{s-1}, \: s\ge 2,$ reaches its maximum at the point $x=0.$ Let us prove that 
\begin{equation}\label{h_sless}
	h(x,b)\le h\Big(x, \frac{(s-1)m+p}{sm+p}\Big)+ h(0,b)-h\Big(0, \frac{(s-1)m+p}{sm+p}\Big), \quad 0\le x \le 1, \: b\in I_s. 
	\end{equation}
 If \eqref{h_sless} is true, then the function $h(x,b),$ $b\in I_s,$ reaches its maximum at the point $x=0$ as well. From this the result of theorem  follows. 

\vspace{2mm}

Let
$$k(x,b):=\dfrac{(1-x)^2(m+p(1-bx))}{(1-bx)^2}, \quad 0\le x\le 1, \: 0\le b \le 1.$$
Consider the function
$$\widetilde{k}(x,b):=\dfrac{(1-bx)^3k_x^{\prime}(x,b)}{1-x}=-pb^2x^2+pb(3-b)\cdot x-2m-2p+pb+2bm.$$

Let us note that $\widetilde{k}_x^{\prime}(x,b)\ge 3pb(1-b)>0$ and
$\dfrac{\widetilde{k}(1)}{2}=(b-1)\cdot m-(b-1)^2\cdot p\le 0.$
 Hence, $\widetilde{k}(x,b)\le 0$ and $k_x^{\prime}(x, b)\le 0, \: 0\le x \le 1, \: 0\le b\le 1.$
Note that
\begin{equation}\label{h'}
	h^{\prime}_x(x,b)=p+k^{\prime}_x(x,b)\dfrac{sm+p}{m+p}b^s\le p+k^{\prime}_x(x,b)\dfrac{m}{m+p}\dfrac{b^{s+1}}{1-b}, \quad b\in I_s, \:s\ge 2.	\end{equation}
We fix $x\in [0,1]$ and denote
$$\phi_s(b)=k^{\prime}_x(x,b)\dfrac{b^{s+1}}{1-b}, \quad b\in I_s, \: s\ge 2.$$
Let us prove that $\phi_s(b)$ is non-increasing function:
$$
	\phi_s^{\prime}(b)\le 0 \quad \Longleftrightarrow \quad \dfrac{b(1-b)k^{\prime \prime}_{xb}(x,b)(1-bx)^3}{1-x}+\dfrac{(s(1-b)+1)k^{\prime}_x(x,b)(1-bx)^3}{1-x}\le 0. \\
$$
But
\begin{equation*}
\begin{aligned}
  &\dfrac{b(1-b)k^{\prime \prime}_{xb}(x,b)(1-bx)^3}{1-x}+\dfrac{(s(1-b)+1)k^{\prime}_x(x,b)(1-bx)^3}{1-x} 
\\
\\
&=b(1-b)\big(\widetilde{k}^{\prime}_b(x,b)+3xk^{\prime}_x(x,b)(1-bx)^2\big)+(s(1-b)+1)\widetilde{k}(x,b)
\\
\\
&\le  b(1-b)\widetilde{k}^{\prime}_b(x,b)+(s(1-b)+1)\widetilde{k}(x,b)
\\
\\
&=\Big(\big((s+2)b-s-3\big)b^2x^2+\big((2+s)b^2+(-6-4s)b+3s+6\big)\cdot x
\\
\\
&+(-s-1)b^2+(2+3s)b-2s-2\Big)\cdot p -2(1-b)^2(1+s)\cdot m
\\
\\
&\le  
\Big(\big((s+2)b-s-3\big)b^2x^2+\big((2+s)b^2+(-6-4s)b+3s+6\big)\cdot x
\\
\\
&+(-s-1)b^2+(2+3s)b-2s-2\Big)\cdot p
.
\end{aligned}
\end{equation*}
Let 
\begin{multline*} w(x)=\big((s+2)b-s-3\big)b^2x^2+\big((2+s)b^2+(-6-4s)b+3s+6\big)\cdot x
	\\
	\\
	+(-s-1)b^2+(2+3s)b-2s-2.
\end{multline*}
The maximum point of the  function $w$ is
$$x_0=\dfrac{(2+s)b^2+(-6-4s)b+3s+6}{2b(s+3-(s+2)b)}\ge 1.$$
Thus, we need to calculate the value of the function $w$ at point $x=1:$
\begin{equation*}
	\begin{aligned}
&w(1)= (2+s)b^3-(5+3s)b^2+(4+3s)b-s-1.
\end{aligned}
\end{equation*}
If $b\le \dfrac{s+1}{s+2}, \: s\ge 2,$ then the last expression is non-positive. By the conditions of the theorem, $b\le \dfrac{sm+p}{(s+1)m+p}, \:s\ge 2$ and $p\le m.$ Therefore, $b\le \dfrac{s+1}{s+2}, \: s\ge 2.$
Thus, $w(x)\le 0, \: 0\le x\le 1.$ 
Therefore, $\phi_s(b), s\ge 2,$ is non-increasing function. Using this and \eqref{h'}, we conclude that
$$h^{\prime}_x(x,b)\le p+\dfrac{m}{m+p}\phi_s\Big(\dfrac{(s-1)m+p}{sm+p}\Big)=h^{\prime}_x\Big(x, \dfrac{(s-1)m+p}{sm+p}\Big), \quad b\in I_s, \: s\ge 2.$$
Hence,
\begin{equation*}
	\begin{aligned}
		&\Big(h(x,b)-h\Big(x,\dfrac{(s-1)m+p}{sm+p}\Big)\Big)^{\prime}_x\le 0, \quad 0\le x\le 1, \: b\in I_s \\ 
	\\ & \Longrightarrow \quad  h(x,b)-h\Big(x,\dfrac{(s-1)m+p}{sm+p}\Big)\le h(0,b)-h\Big(0,\dfrac{(s-1)m+p}{sm+p}\Big), \quad 0\le x\le 1, \: b\in I_s,
	\end{aligned}
\end{equation*}
which is exactly \eqref{h_sless}. Thus, the theorem is proved.
	\end{proof}
	
	\vspace{2mm}
		\section{Weighted majorant series estimates} \label{Sec4}

In this Section we will give new estimates for the weighted majorant series of the form $$\sum_{n \ge m}c_n|a_n|r^{n}, \quad c_n>0\: (n\ge 1), \: m\in \mathbb{N}_0.$$ The obtained result will be applied to the problem on the Bohr radius of the differentiation operator.
To do this, we will use  the concepts of the Bohr radius of a pair of operators   and related concepts introduced in the work \cite{Khas}. Let us recall these definitions.

Let $D$ be an open disk with center at zero or an interval with center at zero. Let us denote by
$\mathcal{H}(D)$  the set of all functions of the form $f(z)=\sum_{n\ge 0}a_nz^n,$ where the series converges in $D$. Let
$$\mathcal{H}_m(D):=\{f\in \mathcal{H}(D): \: f(0)=f^{\prime}(0)=...=f^{(m-1)}(0)=0\}.$$

\begin{definition}
	
	Let $m\in \mathbb{N}_0, \: t,s_1,s_2 \ge 0$ and $$T_1:\mathcal{H}_m(t\mathbb{D})\rightarrow \mathcal{H}(\mathbb{D}), \hspace{5mm} T_2: \mathcal{H}_m(-s_1,s_1) \rightarrow \mathcal{H}(-s_2,s_2)$$ are linear operators. Supremum of the set of numbers $R\in [0,\infty)$ such that
	
	\begin{equation} \label{def}
		\|T_1f\|_{\infty}\le 1 \Longrightarrow |T_2M_rf|\le 1, \quad 0\le  r\le R, \end{equation}
	will be called the Bohr radius of a pair $T_1$ and $T_2$  and denoted by $R_{T_1\rightarrow T_2}.$ If $T_1=T_2=T,$ then we write $R_{T}$. 
	
\end{definition}

The condition \eqref{def} is equivalent to the inequality
$$|T_2M_rf|\le \|T_1f\|_{\infty}.$$

Let us denote by $id_m$ the identity operator defined in the space $\mathcal{H}_m(\mathbb{D}).$  From Bohr's theorem it follows that $R_{id_0}=1/3.$ In 1962, E. Bombieri proved \cite{Bomb} that $R_{id_1}=1/\sqrt{2}.$ The problem of calculating $R_{id_m}$ remains open. It was discussed in \cite{PPS} and \cite{Bombb}

\vspace{2mm}
\begin{definition}
	
The function $$m_{T_1\rightarrow T_2}(r):=\sup_{f: \|T_1f\|_{\infty}\neq 0}\dfrac{|T_2M_rf|}{\|T_1f\|_{\infty}}$$
will be called the Bohr-Bombieri function of the operators $T_1$ and $T_2.$ If the operators $T_1$ and $T_2$ coincide and are equal to $T$, then we write $m_{T}(r).$
	
\end{definition}

It follows from the Bohr theorem that $m_{id_0}(r)=1, \:0\le r\le 1/3.$ In 1962, E. Bombieri and D. Ricci proved \cite{Bomb} that for $ r\in [1/3, 1/\sqrt{2}],$
$$m_{id_0}(r)=\dfrac{3-\sqrt{8(1-r^2)}}{r}.$$
For $r\in [1/\sqrt{2},1),$ the problem of calculating the Bohr-Bombieri function for the operator  $id_0$ remains open. Profound results related to this problem were obtained by E.~Bombieri and J.~Bourgain in 2004 (see \cite{Bombb}).

\vspace{3mm}
\begin{definition}
	
Let us consider the functions of the form $f(z)=\sum_{n\ge m}a_nz^n.$ We fix the modulus of the initial coefficient $a:=|a_m|$.
The supremum of the set of the numbers $r$  such that for all functions of the form $f(z)=\sum_{n\ge m}a_nz^n, \: |a_m|=a,$ the condition \eqref{def} is satisfied
will be called the Bohr radius of a pair of operators $T_1$ and $T_2$ with the initial coefficient $a$  and denoted by
$R_{T_1\rightarrow T_2}(a).$ We similarly define the Bohr--Bombieri function of a pair of operators with an initial coefficient:
$$ m_{T_1\rightarrow T_2}(r,a):=\sup\dfrac{|T_2M_rf|}{\|T_1f\|_{\infty}},$$
where the supremum is taken over all functions $f(z)=\sum_{n \ge m}a_nz^n,$ such that
$\|T_1f\|_{\infty}\neq 0$ and $|a_m|=a.$

\end{definition}

	\vspace{1mm}
For example (see (\cite{Bomb}), $$R_{id_0}(a)=\dfrac{1}{1+2a}, \quad 1/2<a\le 1.$$

\vspace{2mm}
The above concepts are convenient for formulating certain theorems and allow us to give them a unified proof. In addition, they can be used to formulate statements when the Bohr radius is unknown (see Theorems 6-9 of the work \cite{Khas}). For a number of examples of previously calculated Bohr radii and Bohr--Bombieri functions, see \cite{Khas}.

\vspace{3mm}
\begin{definition}
Let $f(z)=\sum_{n \ge m}a_nz^n, \: m\in \mathbb{N}_0$ and let $h(z)=\sum_{n \ge m}c_nz^n $ converges in the disk of the radius $s>0.$ The operator
$$A_hf(z)=(h\ast f)(z):=\sum_{n\ge m}c_na_nz^n,$$
acting from $\mathcal{H}_m(t\mathbb{D }),\: t>0,$ to $\mathcal{H}_m(st\mathbb{D }),$
is called the Hadamard convolution operator (see, for example, \cite{Rusch}). We will call the function $h$ a convolution function.
\end{definition}

\vspace{2mm}
Let $m\in \mathbb{N}_0$ and $h(z)=\sum_{n\ge m} c_nz^n.$ Let $S_{m,l}$ be a shift operator defined in the space $\mathcal{H}_m(D),$ namely $$S_{m,l}f(z):=z^{l}f(z), \quad f\in \mathcal{H}_m( D).$$ We will consider operators of the form \begin{equation*} \label{Amh} A^{m,l}_{h}f:=S_{m,l}(h \ast f)=\sum_{n \ge m}c_na_nz^{n+l}, \quad f(z)=\sum_{n\ge m}a_nz^n.
\end{equation*}

 In particular, examples of such operators are the differentiation operators defined in $\mathcal{H}_m(\mathbb{D}),$ that is \begin{equation} \label{defderiv}
	\partial^mf(z):=\dfrac{d^m}{dz^m}f(z)=m!\cdot S_{m,-m}\Big(\dfrac{z^m}{(1-z)^{m+1}} \ast f(z)\Big), \quad \partial:=\partial^1,
\end{equation}
 and the integration operator  acting from
 $\mathcal{H}_0(\mathbb{D})$ to $\mathcal{H}_1(\mathbb{D })$:
\begin{equation*}\label{int}
	\int_0^zf(\zeta)d\zeta=S_{0,1} \Big(\dfrac{-\log(1-z)}{z}\ast f(z)\Big).	
\end{equation*}

\vspace{3mm}
Let us formulate the main result of \cite{Khas} (more precisely, its special case):

\newtheorem*{D}{Theorem D}
\begin{D} \label{thB}
	Let $h(z)=\sum_{n\ge m}c_nz^n, \: c_n>0.$ If $\dfrac{r}{a}\le \inf_{n\ge m+1}\dfrac{c_{n}}{c_{n+1}}$ and $a>r,$ then 
	$$m_{id_m \rightarrow A^{m, l}_{h}}(r,a)=r^{m+l}c_ma+(1/a-a)a^{-m}r^l(h(ar)-c_m(ar)^m).$$
\end{D}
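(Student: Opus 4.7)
The plan is to reduce the claim to a weighted Goluzin subordination inequality and then convert its quadratic conclusion into the desired linear majorant estimate by a single Cauchy--Schwarz. The extremal is the Möbius function $G(z)=(a+z)/(1+az)$ lifted by a factor $z^m$.

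First I would normalize. Factoring $r^{l}$ from both sides and introducing $\widetilde f(z)=f(z)/z^m$---which is analytic on $\mathbb{D}$, satisfies $\|\widetilde f\|_{\infty}=\|f\|_{\infty}$ by the maximum principle on $|z|=1$, and has $\widetilde f(0)=a_m$---then rotating so $a_m=a>0$, the claim reduces to
$$\sum_{n\ge 0} c_{m+n}\,|\widetilde a_n|\,r^n \;\le\; c_m a + (1-a^2)\sum_{n\ge 1} c_{m+n}\, a^{n-1} r^n$$
for all analytic $\widetilde f:\mathbb{D}\to\overline{\mathbb{D}}$ with $\widetilde f(0)=a$, where $\widetilde a_n=a_{m+n}$. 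The identity $h(ar)-c_m(ar)^m=\sum_{n\ge m+1} c_n(ar)^n$ then shows that multiplying by $r^{m+l}$ recovers exactly the theorem's right-hand side.

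Next I would invoke subordination. Since $G$ is univalent with $G(\mathbb{D})=\mathbb{D}$ and $G(0)=a$, the inclusion $\widetilde f(\mathbb{D})\subset\mathbb{D}$ forces $\widetilde f\prec G$, and the coefficients of $G$ satisfy $|G_n|=(1-a^2)\,a^{n-1}$ for $n\ge 1$. I would choose the weights
$$\mu_n\;=\;\frac{c_{m+n}\,r^n}{a^{n-1}},\qquad n\ge 1,$$
and note that $\mu_n\ge \mu_{n+1}$ is exactly the hypothesis $c_{m+n}/c_{m+n+1}\ge r/a$, i.e., $r/a\le \inf_{n\ge m+1} c_n/c_{n+1}$. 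Thus $(\mu_n)$ is non-increasing and Goluzin's Theorem C applied to $\widetilde f\prec G$ gives
$$\sum_{n\ge 1}\mu_n |\widetilde a_n|^2 \;\le\; \sum_{n\ge 1}\mu_n |G_n|^2 \;=\; (1-a^2)^2 \sum_{n\ge 1} c_{m+n}\,a^{n-1} r^n.$$

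Then I would apply Cauchy--Schwarz to the factorization $c_{m+n}r^n|\widetilde a_n|=\bigl(\mu_n^{1/2}|\widetilde a_n|\bigr)\bigl(c_{m+n}\,a^{n-1}r^n\bigr)^{1/2}$ (the two factors multiplying back to $c_{m+n}r^n|\widetilde a_n|$) to obtain
$$\Bigl(\sum_{n\ge 1} c_{m+n}r^n|\widetilde a_n|\Bigr)^{2} \;\le\; \Bigl(\sum_{n\ge 1}\mu_n|\widetilde a_n|^2\Bigr)\Bigl(\sum_{n\ge 1}c_{m+n}a^{n-1}r^n\Bigr)\;\le\;(1-a^2)^2\Bigl(\sum_{n\ge 1}c_{m+n}a^{n-1}r^n\Bigr)^{2},$$
so taking square roots and adding the trivial $n=0$ contribution $c_m|\widetilde a_0|=c_m a$ completes the upper bound. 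For the matching equality, $f(z)=z^m G(z)$ makes $\widetilde f\equiv G$ (so Goluzin is sharp) and $|G_n|/a^{n-1}=1-a^2$ is constant (so Cauchy--Schwarz is sharp); hence the supremum defining $m_{id_m\to A^{m,l}_h}(r,a)$ is attained.

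The main obstacle is spotting the correct weight $\mu_n$: it must simultaneously be non-increasing under exactly the stated hypothesis on $(c_n/c_{n+1})$ (so Theorem C applies) and make Cauchy--Schwarz tight at the Möbius extremal (which happens because $|G_n|/a^{n-1}$ is constant in $n$). The choice $\mu_n=c_{m+n}r^n/a^{n-1}$ is essentially forced by these two compatibility demands; the side hypothesis $a>r$ then simply guarantees $G$ lies in the admissible class and $(\mu_n)$ is genuinely positive and decreasing.
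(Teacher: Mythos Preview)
Your proof is correct. Note that the paper does not actually prove Theorem~D---it is quoted as a special case of the main result of \cite{Khas}---so there is no in-paper proof to compare line by line. However, your argument is precisely the method the paper uses to prove its own Theorem~\ref{mainth} (the extension of Theorem~D to the regime $r/a>\inf_{n\ge m+1}c_n/c_{n+1}$): subordinate $f(z)/z^m$ to the M\"obius map $(z+a)/(1+az)$, apply Goluzin's Theorem~C with a weight that is non-increasing exactly under the hypothesis, then Cauchy--Schwarz. Your weight $\mu_n=c_{m+n}r^n/a^{n-1}$ and the paper's $\lambda_n=c_{m+n}\rho^{2n}$ with $\rho^2=r/a$ differ only by the harmless constant factor $a$; likewise your Cauchy--Schwarz split is the same as the paper's $c_n|a_n|r^n=(c_n|a_n|^2\rho^{2n})^{1/2}(c_n(r/\rho)^{2n})^{1/2}$ with $(r/\rho)^2=ar$. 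So your approach is essentially identical to the paper's.
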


\vspace{3mm}
The main goal of this Section is to give estimates for $m_{id \rightarrow A^{m,l}_h}(r,a)$ where $r$ and $a$ such that $\dfrac{r}{ a}>\inf_{n\ge m+1}\dfrac{c_{n}}{c_{n+1}}$ and $a>r.$

\vspace{4mm}

\begin{theorem} \label{mainth}

		Let $s\ge 2, \: h(z)=\sum_{n\ge m}c_nz^n, \: \: c_n>0 \: (n\ge m+1), \: a>r$ and
	\begin{equation} \label{cond0}
		r/a\le c_{n+m}(r/a)^{n} \le c_{m+s}(r/a)^{s}, \quad 1\le n<s.
	\end{equation}
	If 
	\begin{equation} \label{cond1}
		\inf_{n\ge s+m-1}\dfrac{c_{n}}{c_{n+1}} \le \dfrac{r}{a}\le \inf_{n\ge s+m}\dfrac{c_{n}}{c_{n+1}},
	\end{equation}
	then
	\begin{multline*}
		(1/a-a)a^{-m}r^l(h(ar)-c_m(ar)^m)\le  m_{id \rightarrow A^{m,l}_{h}}(r,a)-r^{m+l}c_ma 
		\\
		\\
		\le\Big(\dfrac{c_{m+s}}{c_{m+1}}\Big(\dfrac{r}{a}\Big)^{s-1}\Big)^{1/2}(1/a-a)a^{-m}r^l(h(ar)-c_m(ar)^m).
	\end{multline*}
	For $s=2$ and $c_{m+1}=1$, the condition \eqref{cond0} is unnecessary. 
	
\end{theorem}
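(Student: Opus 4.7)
The plan splits into a lower bound and an upper bound. The lower bound comes from the extremal candidate $f(z) = z^m (a+z)/(1+az)$. Expanding $(a+z)/(1+az) = a + (1-a^2)\sum_{k \geq 1}(-a)^{k-1} z^k$ gives $|a_m| = a$, $\|f\|_{\infty} = 1$, and a direct computation shows $r^l \sum_{n \geq m} c_n |a_n| r^n = r^{m+l} c_m a + (1/a - a) a^{-m} r^l (h(ar) - c_m(ar)^m)$. Since $m_{id \to A^{m,l}_h}(r,a)$ is defined as a supremum, this produces the lower bound.

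For the upper bound, after rotating so that $a_m = a > 0$, I write $f(z) = z^m g(z)$. The iterated Schwarz lemma yields $\|g\|_{\infty} \leq 1$, so $g(z) = \sum_{k \geq 0} b_k z^k$ is a Schur function with $b_0 = a$. The standard Schwarz--Pick factorization then gives $g \prec G$, where $G(z) = (a+z)/(1+az) = a + \sum_{k \geq 1} B_k z^k$ with $|B_k| = (1-a^2) a^{k-1}$. The task reduces to estimating $\sum_{k \geq 1} c_{m+k} |b_k| r^k$.

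The decisive step is the Cauchy--Schwarz decomposition
$$\Bigl(\sum_{k \geq 1} c_{m+k} |b_k| r^k \Bigr)^{2} \leq \Bigl(\sum_{k \geq 1} c_{m+k} |b_k|^2 (r/a)^k\Bigr) \Bigl(\sum_{k \geq 1} c_{m+k} (ar)^k\Bigr),$$
chosen precisely so that the second factor equals $(ar)^{-m} (h(ar) - c_m(ar)^m)$, producing the $h(ar)$ appearing in the target bound. For the first factor, I set $\rho^2 = r/a$ and normalize the weights as $\tilde c_k := c_{m+k}/c_{m+1}$ so that $\tilde c_1 = 1$; hypotheses \eqref{cond0} and \eqref{cond1} translate into the premises of Theorem \ref{G(r)} applied to $g \prec G$ at radius $\rho$, yielding
$$\sum_{k \geq 1} c_{m+k} |b_k|^2 (r/a)^k \leq \frac{c_{m+s}}{c_{m+1}} (r/a)^{s-1} \sum_{k \geq 1} c_{m+k} |B_k|^2 (r/a)^k.$$
A short computation using $|B_k|^2 = (1-a^2)^2 a^{2(k-1)}$ shows $\sum_{k \geq 1} c_{m+k} |B_k|^2 (r/a)^k = ((1-a^2)/a)^2 \sum_{k \geq 1} c_{m+k} (ar)^k$. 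Substituting back into Cauchy--Schwarz, taking square roots and multiplying by $r^{m+l}$, one recovers the upper bound of the theorem.

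For the final claim, when $s = 2$ and $c_{m+1} = 1$ the condition \eqref{cond0} reduces to $c_{m+1}(r/a) \leq c_{m+2}(r/a)^2$, i.e.\ $c_{m+1}/c_{m+2} \leq r/a$, which is already contained in \eqref{cond1}. The main technical subtlety I anticipate is aligning the normalization carefully so that the premises of Theorem \ref{G(r)} are genuinely satisfied by $\tilde c_k$ at radius $\rho^2 = r/a$; the rest of the argument is routine algebra.
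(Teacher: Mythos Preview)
Your proposal is correct and follows essentially the same route as the paper: the lower bound from the M\"obius extremal $z^m(a+z)/(1+az)$, and the upper bound via the Cauchy--Schwarz splitting $r^k = (r/a)^{k/2}(ar)^{k/2}$ together with the subordination $f(z)/z^m \prec (a+z)/(1+az)$ and Theorem~\ref{G(r)} applied to the normalized weights $c_{m+k}/c_{m+1}$ at radius $\rho^2 = r/a$. The only cosmetic difference is that the paper packages the subordination step as a separate Lemma~1, whereas you fold it directly into the main argument.
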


\vspace{3mm}
Let us first prove the following lemma:
\newtheorem*{L1}{Lemma 1}
\begin{L1}
	Let $s\ge 2, \: m\ge 0, \:h(z)=\sum_{n\ge m}c_nz^n, \: c_n> 0,\: n\ge m+1,$ \: $\inf_{n\ge s+m-1}\dfrac{c_{n}}{c_{n+1}} \le x^2\le \inf_{n\ge s+m}\dfrac{c_{n}}{c_{n+1}}$ and
	\begin{equation} \label{cond}
		x^{2}\le c_{n+m}x^{2n} \le c_{m+s}x^{2s}, \quad 1\le n<s.
	\end{equation}
	Let $f(z)=\sum_{n\ge m}a_nz^n, \: \|f\|_{\infty}\le 1, \: a=|a_m|.$ Then
	$$\sum_{n\ge m+1}c_n|a_n|^2x^{2n}\le \dfrac{c_{m+s}}{c_{m+1}}x^{2(s-1)}(1/a-a)^2a^{-2m}(h((ax)^2)-c_m(ax)^{2m}).$$ 
For $s=2$ and $c_{m+1}= 1$, the condition \eqref{cond} is unnecessary.
\end{L1}
\begin{proof}[Proof.]
By Schwarz lemma $\Big|\dfrac{f(z)}{z^m}\Big|\le 1, z\in \mathbb{D},$ therefore
$\dfrac{f(z)}{z^m}\prec \dfrac{z+a}{1+az},$ that is
$$\sum_{n\ge 0}a_{n+m}z^n \prec a+\Big(a-\dfrac{1}{a}\Big)\sum_{n\ge 1}(-az) ^n.$$
Applying  Theorem \ref*{G(r)}
to these functions and weights $\Big(\dfrac{c_{n+m}}{c_{m+1}}\Big)_{n\ge 1},$ we obtain the statement of the lemma.
\end{proof}

\vspace{3mm}
\begin{proof}[Proof of Theorem \ref{mainth}.]
To obtain the lower bound   it is enough to consider the function $f(z)=\dfrac{z+a}{1+az}.$ Let us give the upper estimate for the sum $A_h^{m,l}M_rf$ using the Cauchy--Bunyakovsky inequality. We choose $\rho=\sqrt{r/a}$ (this imposes the restriction $a>r$):
\begin{equation*}\label{chain}
	r^{-l}A_h^{m,l}M_rf=\sum_{n\ge m}c_n|a_n|r^{n}  \le
	c_mar^{m}+\Big(\sum_{n\ge m+1}c_n|a_n|^2\rho^{2n}\Big)^{1/2}\Big({\sum_{n\ge m+1}c_n(r/\rho)^{2n}}\Big)^{1/2}.
\end{equation*}
Using Lemma 1 we conclude that if
$$\inf_{n\ge s+m-1}\dfrac{c_{n}}{c_{n+1}} \le \rho^2=\dfrac{r}{a}\le \inf_{n\ge s+m}\dfrac{c_{n}}{c_{n+1}}$$ and
$$
\rho^{2} \le c_{n+m}\rho^{2n} \le c_{m+s}\rho^{2s}, \quad 1\le n<s,
$$then
\begin{multline*}
	c_mar^{m}+\Big(\sum_{n\ge m+1}c_n|a_n|^2\rho^{2n}\Big)^{1/2}\Big({\sum_{n\ge m+1}c_n(r/\rho)^{2n}}\Big)^{1/2} 
	\\ \\	\le 
	c_mar^{m}+(c_{s+m}\rho^{2(s-1)})^{1/2}(1/a-a)a^{-m}\big({h((a\rho)^2)-c_m(a\rho)^{2m}}\big)^{1/2}\cdot \\ \\ \cdot \big({(h((r/\rho)^2)-c_m(r/\rho)^{2m}}\big)^{1/2}. 
\end{multline*}

Thus, we proved the upper bound for $A_h^{m,l}M_rf.$
\end{proof}
\vspace{3mm}

In the work \cite{GolDeriv} it was shown that $R_{id \rightarrow \partial}=1-\sqrt{2/3}.$ From the proof of this theorem we can obtain the following estimates for the Bohr radius with initial coefficient:
\begin{equation*} \label{old_est}
	  1-\sqrt{\dfrac{1+a}{2+a}} \le  R_{id_1 \rightarrow \partial}(a)\le \dfrac{1}{a}\Big(1-\sqrt{\dfrac{1+a}{1+2a}}\Big), \quad 0\le a<1.
\end{equation*}
This fact was obtained using the standard estimate $|a_n|\le 1-a^2, \: n\ge 1,$ which follows from the Schwarz--Pick inequality and the symmetrization of the analytic function (see, for example, \cite{Khas}): let $\|f\|_{\infty}\le 1,$ then
$$M_rf^{\prime}=\sum_{n\ge 1}n |a_n|r^{n-1}\le a+\Big(\dfrac{1}{(1-r)^{2}}-1\Big)(1-a^2).$$
The last expression is not greater than one if $r\le 1-\sqrt{\dfrac{1+a}{2+a}},$ which proves the required lower estimate. The upper bound can be obtained by considering the functions $f_a(z)=\dfrac{z-a}{1-az}$. Note that problems on the Bohr radius of the differentiation operator and related problems were also discussed in the work \cite{BhDasDiff}.

\vspace{3mm}
In the work  \cite{Khas} for some $a$ the sharp estimate for the Bohr radius of the differentiation operator with a fixed initial coefficient $a$ was found, namely the following theorem was proved:
\newtheorem*{E}{Theorem E}
\begin{E}\label{Th:3}
		Let $m\ge 1.$ If \begin{equation}\label{condBohrDiff}
		\Big(1-\dfrac{2a^2}{m+2}\Big)^{m+1}\le \dfrac{1+a}{1+2a}, \quad 0<a<1,
	\end{equation}
	then
	$$R_{id_m\rightarrow \partial^m/m!}(a)=r_m(a):=\dfrac{1}{a}\Big(1-\sqrt[m+1]{\dfrac{1+a}{1+2a}}\Big).$$
	In other words, if $ f\in \mathcal{H}_m$ and $\Big|\dfrac{f^{(m)}(0)}{m!}\Big|=a,$ then
	$$M_r f^{(m)} \le m!\|f\|_\infty, \quad r\le r_m(a),$$
	and the number $r_m(a)$ is the best possible.
\end{E}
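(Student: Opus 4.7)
The plan is to identify $\partial^m/m!$ as a shifted Hadamard convolution operator and then invoke Theorem D. From (\ref{defderiv}) we have $\partial^m/m! = A^{m,-m}_h$ with $h(z) = z^m/(1-z)^{m+1} = \sum_{n\ge m}\binom{n}{m}z^n$, so $c_n = \binom{n}{m}$, $c_m = 1$, and the sequence $c_n/c_{n+1} = (n+1-m)/(n+1)$ is strictly increasing in $n$, yielding $\inf_{n\ge m+1} c_n/c_{n+1} = c_{m+1}/c_{m+2} = 2/(m+2)$.

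The key preliminary step is to observe that the hypothesis (\ref{condBohrDiff}) is, after taking $(m+1)$-th roots and rearranging, equivalent to the inequality $r_m(a)\le 2a/(m+2)$. Consequently, for every $r\le r_m(a)$ the conditions $a>r$ and $r/a\le \inf_{n\ge m+1} c_n/c_{n+1}$ required by Theorem D are satisfied, and that theorem produces
$$m_{id_m\to\partial^m/m!}(r,a) \;=\; a + \left(\frac{1}{a}-a\right)\left(\frac{1}{(1-ar)^{m+1}}-1\right).$$
A short algebraic manipulation shows this expression is $\le 1$ precisely when $(1-ar)^{m+1}\ge (1+a)/(1+2a)$, i.e.\ precisely when $r\le r_m(a)$; this establishes $M_r f^{(m)}\le m!\,\|f\|_\infty$ for every admissible $f$ and every $r\le r_m(a)$.

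For sharpness, I would exhibit the extremal function $f(z) = z^m(z+a)/(1+az)$. Since $(z+a)/(1+az)$ is a disk automorphism, $\|f\|_\infty = 1$; the Taylor coefficients $|a_m|=a$ and $|a_{m+k}| = (1-a^2)a^{k-1}$ for $k\ge 1$ make $M_r f^{(m)}/m!$ equal to the right-hand side displayed above, which takes the value $1$ exactly at $r = r_m(a)$, confirming that $r_m(a)$ cannot be improved.

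The main obstacle is the correspondence carried out in the second paragraph: one must recognize that the algebraic content of (\ref{condBohrDiff}) is precisely the statement that $r_m(a)$ does not exceed $2a/(m+2)$, the cutoff beyond which Theorem D ceases to deliver a closed-form Bohr--Bombieri function. Once this identification is in hand, both the bound and its sharpness reduce to routine manipulations of the closed-form expression from Theorem D; without it, one obtains only the weaker, non-sharp bound $r\le 1-\sqrt[m+1]{(1+a)/(2+a)}$ coming from the Schwarz--Pick estimate $|a_n|\le 1-a^2$.
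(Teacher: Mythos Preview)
Your argument is correct. Note, however, that the present paper does not itself prove Theorem~E: it is quoted from the author's earlier work \cite{Khas}, where Theorem~D is described as the main result. Your derivation of Theorem~E as a direct corollary of Theorem~D---identifying $\partial^m/m!$ with $A^{m,-m}_h$ for $h(z)=z^m/(1-z)^{m+1}$, computing $\inf_{n\ge m+1}c_n/c_{n+1}=2/(m+2)$, and recognizing that condition~(\ref{condBohrDiff}) is precisely the inequality $r_m(a)\le 2a/(m+2)$ needed to keep Theorem~D applicable up to $r=r_m(a)$---is exactly the natural route and is presumably how it is done in \cite{Khas}. The sharpness via $f(z)=z^m(z+a)/(1+az)$ is likewise the standard extremal; in fact, since Theorem~D already delivers the Bohr--Bombieri function as an equality, the explicit extremal function is implicit in that statement.
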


In particular, if $a\in (a_0,1],$ where $a_0=0.429782...$ is the root of the equation $\Big(1-\dfrac{2}{3}a^2\Big)^ {2}=\dfrac{1+a}{1+2a}, \: 0<a<1,$
then
$$R_{id_1\rightarrow \partial}(a)=\dfrac{1}{a}\Big(1-\sqrt{\dfrac{1+a}{1+2a}}\Big).$$

Using theorem \ref{mainth}, we obtain the new lower estimates for the value $R_{id_1\rightarrow \partial}(a)$ for all $a\in [0,1]$.

\vspace{2mm}
\begin{theorem} \label{BohrDiff}
	If $a\in [0, 0.429782...],$ 
	then
	$$\dfrac{as}{1+s}\le R_{id_1 \rightarrow \partial}(a)\le \dfrac{1}{a}\Big(1-\sqrt{\dfrac{1+a}{1+2a}}\Big),$$
where $s$ is the root of the equation
	\begin{equation} \label{urav}
		\Big(\dfrac{[s]+1}{2}\Big(\dfrac{s}{s+1}\Big)^{[s]-1}\Big)^{1/2}\Big(\dfrac{1}{(1-a^2\frac{s}{s+1})^2}-1\Big)=\dfrac{a}{1+a}.
	\end{equation}
	\end{theorem}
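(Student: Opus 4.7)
The plan is to apply Theorem~\ref{mainth} to the differentiation operator. I first identify $\partial = A_h^{1,-1}$ where $h(z) = z/(1-z)^2 = \sum_{n\ge 1} n z^n$, so $m=1$, $l=-1$, and $c_n = n$. Since $c_n/c_{n+1} = n/(n+1)$ is non-decreasing, for each integer $k \ge 2$ the range in \eqref{cond1} becomes $r/a \in [k/(k+1), (k+1)/(k+2)]$; the chained inequality \eqref{cond0} is routinely verified on this range using that the map $n \mapsto (n+1)(k/(k+1))^{n-1}$ is non-decreasing on $1 \le n \le k-1$ together with the boundary value $(k+1)(k/(k+1))^{k-1} \ge 2$ for all $k \ge 2$.

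Substituting these values into Theorem~\ref{mainth} and simplifying $h(ar) - c_m(ar)^m = ar[(1-ar)^{-2} - 1]$ yields
$$m_{id_1 \to \partial}(r,a) \le a + \sqrt{\frac{k+1}{2}\left(\frac{r}{a}\right)^{k-1}} \cdot \frac{1-a^2}{a}\left[\frac{1}{(1-ar)^2} - 1\right].$$
Imposing $m_{id_1 \to \partial}(r,a) \le 1$ and using $1-a^2 = (1-a)(1+a)$ reduces the Bohr condition to $F(t) \le a/(1+a)$, where I parametrize $r/a = t/(t+1)$ (so $k = [t]$) and $F(t)$ is exactly the left-hand side of equation~\eqref{urav}. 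Continuity of $F$ at every integer $t = k$ follows from the identity $(k+1)(k/(k+1))^{k-1} = k(k/(k+1))^{k-2}$, while monotonicity within each interval $t \in [k, k+1]$ is immediate since both factors of $F$ are non-decreasing in $t$.

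The hypothesis $a \le a_0$ is equivalent to $(1 - 2a^2/3)^{-2} - 1 \le a/(1+a)$, i.e., $F(2) \le a/(1+a)$. Since $F$ is continuous, non-decreasing on $[2,\infty)$, and $F(t) \to \infty$ as $t \to \infty$, there exists a unique $s \ge 2$ with $F(s) = a/(1+a)$; this is precisely the root of~\eqref{urav}. Consequently, the Bohr condition $m_{id_1 \to \partial}(r,a) \le 1$ holds for all $r \le as/(s+1)$, giving the lower bound $R_{id_1\to\partial}(a) \ge as/(s+1)$. The upper bound $R_{id_1\to\partial}(a) \le (1/a)(1 - \sqrt{(1+a)/(1+2a)})$ is already established in the paragraph preceding the theorem via the extremal function $f(z) = z(z+a)/(1+az) \in \mathcal{H}_1$, for which direct expansion gives $M_r f' = a + \frac{1-a^2}{a}[(1-ar)^{-2} - 1]$, and setting this $\le 1$ yields exactly the claimed bound.

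The main delicate point is the setup in the second paragraph: the parametrization $r/a = t/(t+1)$ causes the piecewise bounds from Theorem~\ref{mainth} (indexed by integer $k$) to glue continuously into a single monotone function $F(t)$, allowing a clean characterization of the critical radius via one real equation. A secondary subtlety is recognizing that the boundary $a = a_0$ of the hypothesis is precisely the threshold where the root $s$ of \eqref{urav} drops to $2$, beyond which Theorem~\ref{mainth} no longer applies and one falls back to Theorem~E.
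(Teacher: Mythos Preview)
Your proof is correct and follows essentially the same route as the paper's: identify $\partial=A_h^{1,-1}$ with $c_n=n$, apply Theorem~\ref{mainth} on each interval $r/a\in[k/(k{+}1),(k{+}1)/(k{+}2)]$, glue the piecewise bounds into a single continuous increasing function via the identity $(k{+}1)(k/(k{+}1))^{k-1}=k(k/(k{+}1))^{k-2}$, and locate the unique root $s\ge 2$ under the hypothesis $a\le a_0$. Your explicit parametrization $r/a=t/(t{+}1)$ is a tidy way to phrase what the paper does with the function $g(t)$, and your extremal function $f(z)=z(z{+}a)/(1{+}az)\in\mathcal H_1$ for the upper bound is in fact more precise than the paper's loosely stated $f_a(z)=(z{-}a)/(1{-}az)$. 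One cosmetic point: in your verification of \eqref{cond0}, the clause ``boundary value $(k{+}1)(k/(k{+}1))^{k-1}\ge 2$'' does not by itself close the chain; what you actually need (and what holds) is that the map $n\mapsto (n{+}1)(k/(k{+}1))^{n-1}$ is non-decreasing all the way up to $n=k$ (the step $n=k{-}1\to k$ is an equality), which simultaneously gives the upper chain and, since the value at $n=1$ is $2\ge 1$, the lower chain.
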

	
	\begin{proof}[Proof.]
	Let us first prove that the root of the equation \eqref{urav} exists and is unique. Let us denote
	$$g(t):=			a+\Big(\dfrac{[t]+1}{2}\Big(\dfrac{t}{t+1}\Big)^{[t]-1}\Big)^{1/2}(1/a-a)\Big(\dfrac{1}{(1-a^2\frac{t}{t+1})^2}-1\Big).$$
It is evident that the function $g$ is monotonic on every interval $[n, n+1),$ since the function $\frac{t}{t+1}$ is monotonic for $t>0.$ 	Let us show that $g$ is a continuous function. Let $p\in \mathbb{N}.$ Then
	\begin{multline*}
		\lim_{t\rightarrow p-}g(t)= a+ \Big(\dfrac{p}{2}\Big(\dfrac{p}{p+1}\Big)^{p-2}\Big)^{1/2}(1/a-a)\Big(\dfrac{1}{(1-a^2\frac{p}{p+1})^2}-1\Big)\\
		= a+\Big(\dfrac{p+1}{2}\Big(\dfrac{p}{p+1}\Big)^{p-1}\Big)^{1/2}(1/a-a)\Big(\dfrac{1}{(1-a^2\frac{p}{p+1})^2}-1\Big)=\lim_{t\rightarrow p+}g(t).
	\end{multline*}
Since the function $g$ is monotonic on every interval $[n, n+1)$ and is continuous, then $g$ is monotonic on $[1, \infty).$ Moreover, $\lim_{t\rightarrow +\infty}g(t)=+\infty$ and
$$g(2)\le 1 \iff a\le 0.429782...$$
Therefore, there is only number $s \in \mathbb{R}$ such that $g(s)=1.$
	
	\vspace{2mm}
Now we obtain the lower estimate for $R_{id_1 \rightarrow \partial}(a).$ Recall the following: $$\partial f=S_{1,-1}(h\ast f),$$
$$h(z)=\sum_{n \ge 1}c_nz^n=\sum_{n \ge 1}nz^n=\dfrac{z}{(1-z) ^{2}}.$$ It is easy to check that if $x\ge \dfrac{[s]}{[s]+1},$ then for $n\le [s]$ the sequence $nx^n$ is increasing. Therefore, by Theorem \ref{mainth}, if
	$$\inf_{n\ge [s]}\dfrac{c_n}{c_{n+1}}\le \dfrac{r}{a} \le \inf_{n\ge [s]+1}\dfrac{c_n}{c_{n+1}},$$ that is, 
	$$\dfrac{[s]}{[s]+1}\le \dfrac{r}{a} \le  \dfrac{[s]+1}{[s]+2},$$ then
	\begin{equation} \label{qwe}
		m_{id_1\rightarrow \partial}(r,a)\le a+\Big(\dfrac{[s]+1}{2}\Big(\dfrac{r}{a}\Big)^{[s]-1}\Big)^{1/2}(1/a-a)\Big(\dfrac{1}{(1-ar)^2}-1\Big).
	\end{equation}
By the  definition of the number $s$, the right-hand side of \eqref{qwe} is not greater than one when $r\le \dfrac{as}{s+1}.$ Thus, we proved the lower bound for $R_{id_1\rightarrow \partial}(a)$.

	\end{proof}
\vspace{4mm}

 Let us compare the old and the new methods for calculating the lower bound for $R_{id_1\rightarrow \partial}(a).$ Let us denote the old lower bound by $R_{id_1\rightarrow \partial}(a)$ by
 $$\tilde{r}(a)=1-\sqrt{\dfrac{1+a}{2+a}},$$
 and the upper estimate by
 $$ \hat{r}(a)=\dfrac{1}{a}\Big(1-\sqrt{\dfrac{1+a}{1+2a}}\Big).$$
 For fixed $a$, we choose the corresponding $s$ defined in the Theorem \ref{BohrDiff} and the corresponding new lower bound $r_s(a).$ We present a table for these quantities:
 
 \vspace{4.5mm}

 \begin{center}
 	\begin{tabular}{|c|c|c|c|c|} 
 		\hline
 		$a$& $\tilde{r}(a)$ & $[s]$ &$r_s(a)$ &$\hat{r}(a)$
 		\\
 		\hline
 		\: \: 0.429  \: \:&   \: \: 0.23298...  \:  \:&  \: \: 2  \: \:&  \:  \:0.28652...   \:\:&  \:  \:0.28674...   \:\: \\ 
 		\hline
 		0.42 & 0.23398... & 2 & 0.28656...&0.28931...\\ 
 		\hline
 		0.4& 0.23623... & 2 & 0.28651... &0.29520...\\ 
 		\hline
 		0.38& 0.23853... & 3 & 0.28598... & 0.30134...\\ 
 		\hline
 		0.36& 0.24087... & 3 & 0.28210...&0.30774...\\ 
 		\hline
 		
 		0.34& 0.24326... & 4 & 0.27674... & 0.31442...\\ 
 		\hline		
 		
 		0.32& 0.24570... &5 & 0.26935... &0.32140...\\ 
 		\hline	
 		
 		0.3& 0.24819... &6 & 0.26002... &0.32870...\\ 
 		\hline	
 		
 		0.28& 0.25073... &8 & 0.24899... &0.33635...\\ 
 		\hline	
 		
 		0.26& 0.25332... &9 & 0.23631... &0.34436...\\ 
 		\hline

 	\end{tabular}
 \end{center}
 
 \vspace{4.5mm}
 
 Thus, we see that in some cases the new method is  more effective than the known one. Moreover, the Theorem \ref{BohrDiff} allows us to formulate the conjecture that if in the problem under consideration there is only the restriction $a>r,$ then the Bohr radius equals to the known upper bound:
 
 \newtheorem*{conj}{Conjecture}
 \begin{conj}
 	If $a\in (a_1,1],$ where $a_1=0.321037...$ is the root of equality
 	$1-\sqrt{\dfrac{1+a}{1+2a}}=a^2,$
 	then
 		$$R_{id_1\rightarrow \partial}(a)=\dfrac{1}{a}\Big(1-\sqrt{\dfrac{1+a}{1+2a}}\Big).$$
 	\end{conj}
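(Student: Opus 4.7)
The plan is to separate the conjecture into its upper-bound half (valid for all $a\in(0,1]$) and its lower-bound half (where the new content lies). The upper bound $R_{id_1\to\partial}(a)\le\hat{r}(a)$ is established by evaluating $M_r f'$ on the Blaschke-like function $f_a(z)=z(z+a)/(1+az)$, which lies in $\mathcal{H}_1(\mathbb{D})$ with $\|f_a\|_\infty=1$ and $f_a'(0)=a$. Expanding $f_a(z)=az+(1-a^2)\sum_{n\ge 2}(-a)^{n-2}z^n$ yields
$$M_r f_a' \;=\; a + (1-a^2)\,r\,\frac{2-ar}{(1-ar)^2},$$
and a direct algebraic manipulation shows $M_r f_a'=1$ is equivalent to $(1+2a)r(2-ar)=1$, whose root in $(0,1)$ is precisely $\hat{r}(a)$. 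Monotonicity in $r$ then gives $M_r f_a'>1$ for $r>\hat{r}(a)$.

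The matching lower bound $R_{id_1\to\partial}(a)\ge\hat{r}(a)$ is already Theorem~E for $a\in(a_0,1]$, where $a_0=0.429782\ldots$, so the genuinely new content is the interval $(a_1,a_0]$. For $a$ in this range I would set $g(z)=f(z)/z$, use Schwarz's lemma to obtain $g\prec\psi$ with $\psi(z)=(z+a)/(1+az)$, and reduce the problem to proving the sharp inequality
$$\sum_{k\ge 1}(k+1)|g_k|\,r^k \;\le\; (1-a^2)\,r\,\frac{2-ar}{(1-ar)^2},\qquad r\le\hat{r}(a),$$
for every $g\prec\psi$, with equality at $g=\psi$. Since Theorem~\ref{mainth} with weights $c_n=n$ already gives a weaker form of this bound but fails by the amount visible in the tabulated gap $\hat{r}(a)-r_s(a)$, the Cauchy--Bunyakovsky step in its proof must be replaced by a sharp majorization. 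My plan is to parametrize $g$ by its Schur coefficients $\gamma_0=a,\gamma_1,\gamma_2,\ldots\in\overline{\mathbb{D}}$ (so that $g=\psi$ corresponds to $\gamma_j=(-1)^{j+1}$ for $j\ge 1$) and study the functional
$$\Phi_r(\gamma) := \sum_{k\ge 1}(k+1)|g_k(\gamma)|r^k$$
on the polydisk. I would verify (i) that the first variation of $\Phi_r$ vanishes at the candidate extremal for every $r$, and (ii) that the second variation is non-positive in every admissible direction exactly when $r\le\hat{r}(a)$; the threshold $\hat{r}(a)$ would emerge as the radius at which some second-order coefficient first changes sign.

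The threshold $a_1$ acquires a natural meaning in this picture: it is the value of $a$ at which $\hat{r}(a)=a$, i.e.\ at which the target radius reaches the boundary of the admissibility region $r<a$ imposed throughout Section~4. For $a\le a_1$ the value $r=\hat{r}(a)$ escapes this region and the Schur-variation framework degenerates, consistent with the fact that, for small $a$, the standard estimate $\tilde{r}(a)$ already exceeds $\hat{r}(a)$ so that no M\"obius function of the type used above can serve as a global extremal. The principal obstacle is step~(ii): the second variation involves $|g_k|$, which is a modulus of a polynomial in the Schur parameters, and is only piecewise smooth, so the second-order analysis must be carried out carefully along each smooth stratum. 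A direct approach would use convexity of $\Phi_r$ on each real slice of the parameter space; an indirect approach would combine a Bombieri--Bourgain-type duality with the sharpness part of Theorem~C to avoid the explicit second-order computation. Either route goes substantially beyond the Cauchy--Bunyakovsky technique underlying Theorem~\ref{mainth}, which is exactly why the statement is conjectured rather than proved in the present paper.
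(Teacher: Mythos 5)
The statement you are addressing is not proved in the paper at all: it is stated explicitly as a \emph{Conjecture}, supported only by Theorem E (which settles the case $a\in(a_0,1]$ with $a_0=0.429782\ldots$) and by the numerical table. Your proposal likewise does not prove it. The upper-bound half is correct: the expansion of $f_a(z)=z(z+a)/(1+az)$, the formula $M_rf_a'=a+(1-a^2)r(2-ar)/(1-ar)^2$, the reduction of $M_rf_a'=1$ to $(1+2a)r(2-ar)=1$, and the identification of its root in $(0,1)$ with $\hat r(a)$ all check out; this reproduces the upper estimate already recorded in Section~\ref{Sec4}. Your observation that $a_1$ is exactly the solution of $\hat r(a)=a$, i.e.\ the point at which the target radius leaves the admissibility region $r<a$ that underlies Theorem~\ref{mainth}, is also correct and is the natural explanation of where the threshold in the conjecture comes from.

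The gap is the entire lower bound on $(a_1,a_0]$, which is the whole content of the conjecture beyond Theorem E. Steps (i) and (ii) of your variational scheme are announced but not carried out: you neither verify that the first variation of $\Phi_r$ vanishes at the candidate extremal nor establish non-positivity of the second variation, and, more fundamentally, even if both were done, a first- and second-order analysis at a single point of the infinite-dimensional Schur-parameter polydisk would only yield \emph{local} extremality, not the global inequality $\sum_{k\ge1}(k+1)|g_k|r^k\le(1-a^2)r(2-ar)/(1-ar)^2$ for every $g\prec\psi$ that the conjecture requires. The piecewise-smoothness of $|g_k(\gamma)|$ that you flag is a real obstruction, and no argument is given that the sign change of a ``second-order coefficient'' occurs precisely at $r=\hat r(a)$ rather than at some other radius. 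The difficulty of upgrading the $\ell^2$-type majorization of Theorems C and \ref{G(r)} to a sharp $\ell^1$-type bound without the loss incurred by the Cauchy--Bunyakovsky step is exactly the open problem; your proposal names it but does not resolve it, as you yourself concede in your final sentence. As written, this is a plausible research program consistent with the paper's conjecture, not a proof of it.
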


\vspace{5mm}

\textbf{Acknowledgments.}  
The author thanks A.D. Baranov for his attention to the work on the
article. I also thanks the referee for careful reading of my paper  and for useful comments.

The results of Section 2 and Section 3 were supported by Ministry of Science and Higher Education of the Russian Federation (agreement No 075-15-2024-631) and Theoretical Physics and Mathematics Advancement
Foundation ``BASIS”. The results of Sections 4 were obtained with the support of Russian Science Foundation project 19-71-30002-$\Pi$.

\vspace{4mm}

\end{document}